\documentclass[12pt]{amsart}

\usepackage{enumitem}
\usepackage{psfrag}
\usepackage{graphicx}
\usepackage{pinlabel}
\usepackage{graphicx}
\usepackage{amsmath}
\usepackage{amssymb}
\usepackage{amscd}
\usepackage{autobreak}
\usepackage{picinpar}
\usepackage{times}
\usepackage{pb-diagram}
\usepackage{graphicx}
\usepackage{wrapfig}
\usepackage{xspace}
\usepackage{hyperref}
\usepackage{url}
\usepackage{caption}
\usepackage{subcaption}
\usepackage{fancyhdr}
\usepackage{overpic}
\usepackage{color}
\usepackage[square,comma,sort&compress,numbers]{natbib}
\usepackage{latexsym}
\usepackage{mathrsfs}
\usepackage{amsfonts}
\usepackage{hyperref}
\usepackage{amsthm}
\usepackage{appendix}
\usepackage{pdfsync}
\usepackage{float}

\theoremstyle{definition}
\newtheorem{theorem}{Theorem}[section]
\newtheorem{lemma}[theorem]{Lemma}
\newtheorem{proposition}[theorem]{Proposition}
\newtheorem{corollary}[theorem]{Corollary}
\newtheorem{definition}[theorem]{Definition}

\newtheorem{remark}[theorem]{Remark}

\newtheorem*{theorem*}{Theorem}
\graphicspath{{figures/}}   

\begin{document}

\title{\bf Spherical orthogonal ring patterns on surfaces and modified combinatorial total geodesic curvatures}
\author{Zhiwen Xiong, Xu Xu}
	
\date{\today}
	
\address{School of Mathematics and Statistics, Wuhan University, Wuhan, 430072, P.R.China}
\email{xiongzhiwen@whu.edu.cn}

\address{School of Mathematics and Statistics, Wuhan University, Wuhan, 430072, P.R.China}
\email{xuxu2@whu.edu.cn}
	
\thanks{MSC (2020): 52C25,52C26}
	
\keywords{Spherical orthogonal ring patterns; rigidity; modified combinatorial total geodesic curvature; combinatorial Ricci flow}
	
\begin{abstract}
Orthogonal ring patterns are natural generalizations of circle patterns.
Bobenko-Hoffmann-R\"orig \cite{B-H-R} and Bobenko \cite{B} established the variational principles of the classical combinatorial curvature for the Euclidean, hyperbolic and spherical orthogonal ring patterns.
Bobenko-Hoffmann-R\"orig's work \cite{B-H-R} and Bobenko's work \cite{B} imply the rigidity of Euclidean and hyperbolic orthogonal ring patterns on closed surfaces, while the rigidity of spherical orthogonal ring patterns on closed surfaces is not known.
In this paper, we study the spherical orthogonal ring patterns on closed surfaces with cellular decompositions satisfying certain necessary conditions.  Using a modification of the combinatorial total geodesic curvature introduced by Nie in \cite{Nie}, we prove the rigidity of spherical orthogonal ring patterns on closed surfaces by variational principles.
\end{abstract}
	
\maketitle
		
\section{Introduction}

\subsection{Spherical orthogonal ring patterns}\label{Spherical ring patterns}
Spherical orthogonal ring patterns were first introduced by Tellier, Hauswirth, Douthe and Baverel \cite{T-H-D-B} for the study of doubly-curved building envelopes.
Motivated by \cite{T-H-D-B}, Bobenko, Hoffmann and R\"orig \cite{B-H-R} introduced the concept of orthogonal ring pattern in the Euclidean plane as natural generalizations of circle patterns,  and established its variational principles with respect to the classical combinatorial curvature.
In \cite{B}, Bobenko generalized the Euclidean orthogonal ring pattern to the sphere and the hyperbolic plane, and established the corresponding variational principles with respect to the classical combinatorial curvature.
In \cite{B}, \cite{B-H-R} and \cite{T-H-D-B}, the cellular decompositions corresponding to the orthogonal ring patterns are subsets of the $\mathbb{Z}_2$ lattice. In this paper, we consider the spherical case and extend the framework of orthogonal ring patterns to cellular decompositions on closed surfaces satisfying certain necessary conditions.

Let $S$ be a closed surface with a cellular decomposition $\Sigma$, in which every face has an even number of edges and every vertex
is adjacent to an even number of edges. We also say that the vertex has an even degree in this case. Let $V$, $E$, $F$ denote the sets of vertices, edges and faces of $\Sigma$, respectively. For each face $f_i\in F$, select a point $O_i$ in $f_i$. Given a radius function $r: F\to (0,\frac{\pi}{2}) $ that maps each face $f_i$ to a value $r_i$, each face $f_i$ corresponds uniquely to a ring centered at $O_i$. The outer radius $R_i$ and the inner radius $r_i$ of this ring satisfy the equation
\begin{equation}\label{equRr}
\cos R_i=q\cos r_i,
\end{equation}
where $q$ is a constant satisfying $0<q\le 1$.

For each edge $e\in E$, suppose $e$ is incident to both faces $f_i$ and $f_j$. We require that the inner circle of $f_i$ is orthogonal to the outer circle of $f_j$, and concurrently, the outer circle of $f_i$ is orthogonal to the inner circle of $f_j$. Please refer to Figure \ref{Figure 1}.
This configuration gives rise to a spherical quadrilateral $O_iv_1O_jv_2$, which satisfies the following two conditions:
\begin{enumerate}
	\item[(C1)] $\angle O_iv_1O_j=\angle O_iv_2O_j=\frac{\pi}{2}$;
	\item[(C2)] $|O_iv_1|=r_i$, $|O_iv_2|=R_i$, $|O_jv_1|=R_j$, $|O_jv_2|=r_j$.
\end{enumerate}
Here $v_1$ and $v_2$ are two intersection points arising from these orthogonality conditions. Note that the equation (\ref{equRr}) ensures the existence of the spherical quadrilateral $O_iv_1O_jv_2$. Therefore, each edge $e\in E$ corresponds uniquely to such a spherical quadrilateral. By gluing all such spherical quadrilaterals isometrically along their corresponding edges, we obtain a spherical orthogonal ring pattern on the surface $S$.

\begin{figure}[htbp]
	\centering
	\includegraphics[scale=1]{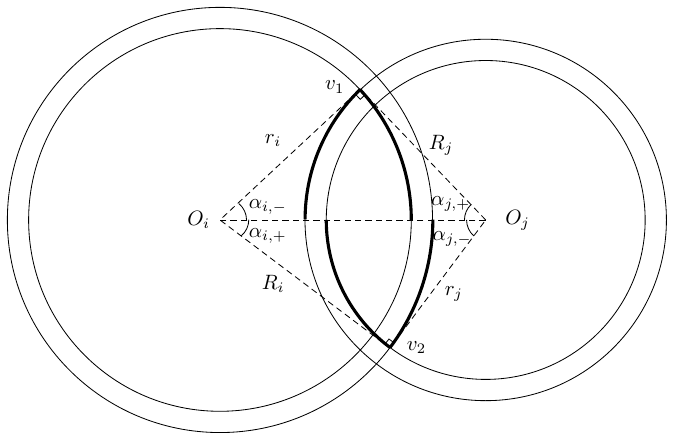}
	\caption{The spherical quadrilateral $O_iv_1O_jv_2$ associated with edge $e$}
	\label{Figure 1}
\end{figure}

\begin{definition}
Let $S$ be a closed surface with a cellular decomposition $\Sigma=(V,E,F)$, such that every face in $\Sigma$ has an even number of edges and every vertex has even degree. A spherical orthogonal ring pattern on the closed surface $(S,\Sigma)$ is a collection of finitely many rings $T_1$, $\cdots$, $T_{|F|}$ satisfying the following conditions:
\begin{enumerate}
	\item[(1)] Each ring corresponds bijectively to a face in $F$, and every ring $T_i$ is centered at $O_i$;
	\item[(2)] For every edge $e\in E$, assume $e$ is incident to both faces $f_i$ and $f_j$. Then the rings $T_i$ and $T_j$ are orthogonal, i.e., the outer circle of $T_i$ is orthogonal to the inner circle of $T_j$, and the inner circle of $T_i$ is orthogonal to the outer circle of $T_j$.
	\item[(3)] Conical singularities are permitted at both the center of each ring and the intersection points of any two rings.
\end{enumerate}
\end{definition}

\begin{remark}
	The requirement that every face in $\Sigma$ has an even number of edges arises from gluing the spherical quadrilaterals along the edges of each face $f_i$ with alternating side lengths $R_i$ and $r_i$, as illustrated in Figure \ref{Figure 3} (left). Similarly, the even degree of every vertex is necessary because, when spherical quadrilaterals are glued successively along edges incident to the vertex, the long side $R_i$ must alternate with the short side $r_j$, for any $i, j\in\{1,...,|F|\}$ such that the faces $f_i$ and $f_j$ share an edge. Figure \ref{Figure 3} (middle) demonstrates the issue that arises when a vertex has odd degree, while Figure \ref{Figure 3} (right) shows the valid local configuration around the vertex with even degree.
\end{remark}

\begin{figure}[htbp]
	\centering
	\includegraphics[scale=1.2]{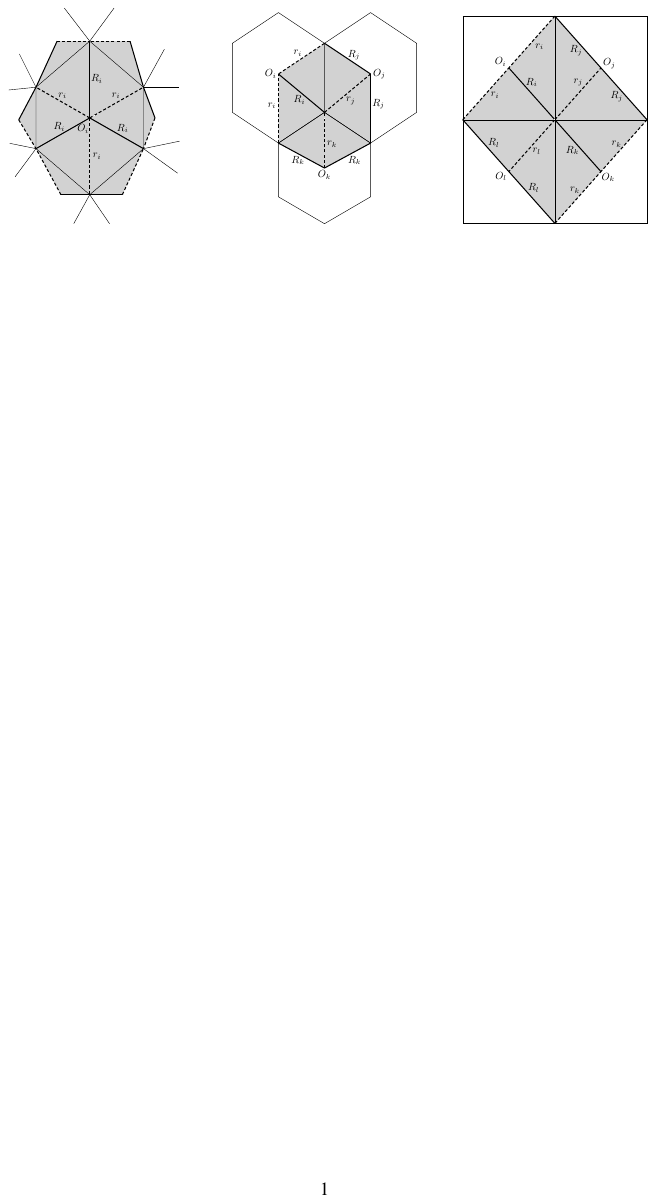}
	\caption{Local construction of spherical orthogonal ring patterns}
	\label{Figure 3}
\end{figure}

Bobenko, Hoffmann and R\"orig \cite{B-H-R} defined the combinatorial curvature of the Euclidean orthogonal ring patterns at each point $O_i$ as
\begin{equation}\label{curvature}
	\Theta_i=\sum_{j:f_j\sim f_i}\varphi_{ij},
\end{equation}
where $\varphi_{ij}$ denotes the angle at $O_i$ of the corresponding quadrilateral in the Euclidean plane and $f_j\sim f_i$ means that faces $f_j$ and $f_i$ share an edge.
Their work implies the rigidity of the Euclidean orthogonal ring pattern on surfaces.
In \cite{B}, Bobenko further defined the combinatorial curvature at each point $O_i$ in the hyperbolic case using the form of equation (\ref{curvature}),
and likewise his work
implies the rigidity of hyperbolic orthogonal ring patterns on surfaces.
However, if the curvature defined by equation (\ref{curvature}) is applied to the spherical setting, the corresponding rigidity results cannot be obtained. Bobenko \cite{B} explained that this is because the Hessian matrix of the potential function associated with this curvature is not positive definite, and thus the convexity is lacking. In this paper, we introduce a new combinatorial curvature, called the modified combinatorial total geodesic curvature, to establish the rigidity of spherical orthogonal ring patterns.

\subsection{Modified combinatorial total geodesic curvature for spherical orthogonal ring patterns.}
In the context of spherical geometry, the geodesic curvature of a circle with radius $r_i$ is $\cot r_i$. By connecting the centers $O_i$ and $O_j$ with a geodesic segment, the spherical quadrilateral $O_iv_1O_jv_2$ is divided into two triangles $\triangle O_iv_1O_j$ and $\triangle O_iv_2O_j$. We denote the two other angles of triangle $\triangle O_iv_1O_j$ as $\alpha_{i,-}$ and $\alpha_{j,+}$, and those of triangle $\triangle O_iv_2O_j$ as $\alpha_{i,+}$ and $\alpha_{j,-}$, as illustrated in Figure \ref{Figure 1}.

For the ring $T_i$, consider the integral of the geodesic curvature of its inner circle along the arc contained in triangle $\triangle O_iv_1O_j$ (shown as the bold arc on the inner circle of $T_i$ in Figure \ref{Figure 1}), and the integral of the geodesic curvature of its outer circle along the arc contained in triangle $\triangle O_iv_2O_j$ (shown as the bold arc on the outer circle of $T_i$ in Figure \ref{Figure 1}). We denote the sum of these two integrals as $L_{(e,i)}$. It is straightforward to verify that
\begin{equation*}\label{L_ij}
	L_{(e,i)}=\alpha_{i,-}\cos r_i+\alpha_{i,+}\cos R_i.
\end{equation*}
Similarly, for the ring $T_j$, denote the corresponding sum of integrals as $L_{(e,j)}$. We also have
\begin{equation*}\label{L_ji}
	L_{(e,j)}=\alpha_{j,+}\cos R_j+\alpha_{j,-}\cos r_j.
\end{equation*}
From this, we can define the modified combinatorial total geodesic curvature at each point $O_i$ as
\begin{equation*}\label{K_i}
	L_i=\sum_{e:e<f_i}L_{(e,i)},
\end{equation*}
where $i\in\{1,\cdots, |F|\}$ and the notation $e<f_i$ indicates that $e$ is an edge of the face $f_i$.

\begin{remark}
Here, our approach is analogous to the Euclidean and hyperbolic settings, where the curvature $\Theta_i$ and the variable $r_i$ are respectively replaced by their counterparts in our framework, which are the modified combinatorial total geodesic curvature $L_i$ and the variable $u_i$ (introduced in Section \ref{Configurations of orthogonal rings}). In particular, in the Euclidean setting, the geodesic curvature of a circle of radius $r_i$ is $\frac{1}{r_i}$. In this case, the modified combinatorial total geodesic curvature $L_i$ reduces to the sum of angles $\Theta_i$ as shown in equation (\ref{curvature}). Different from the Euclidean case, the modified combinatorial total geodesic curvature $L_i$ does not equal $\Theta_i$ in either the spherical or the hyperbolic setting.
\end{remark}

\begin{remark}
In \cite{Nie}, Nie first introduced the combinatorial total geodesic curvature and, using the variational methods, established the rigidity and existence of circle patterns in the spherical setting. Ba-Hu-Sun \cite{B-H-S} then generalized the combinatoiral total geodesic curvature to tangential hyperbolic circle packings. After that, Hu-Lu-Tan-Zhong-Zhou \cite{H-L-T-Z-Z} established the convergence of combinatorial Ricci flows to degenerated circle packings in hyperbolic background geometry via combinatorial total geodesic curvature. From equation (\ref{equRr}), it follows that when $q=1$, the outer radius $R_i$ and the inner radius $r_i$ of every ring in the spherical orthogonal ring pattern are equal. In this case, the modified combinatorial total geodesic curvature we defined above coincides with the combinatorial total geodesic curvature introduced by Nie in \cite{Nie}. In particular, the spherical orthogonal ring pattern also becomes equivalent to the circle pattern constructed by Bobenko and Springborn in \cite{B-S}.
\end{remark}

\subsection{The main theorem.}

In \cite{B-S}, Bobenko and Springborn introduced a new construction of circle patterns and proved the rigidity and existence in both Euclidean and hyperbolic settings via the variational principles. The rigidity and existence for the spherical case were later established by Nie \cite{Nie}
using the combinatorial total geodesic curvatures. Guo and Luo \cite{G-L} generalized Bobenko and Springborn's construction by replacing the circle centers and intersection points from hyperbolic points to ideal and hyperideal points, and proved the rigidity.
In \cite{X-X} and \cite{X-X-Z}, we considered generalized hyperbolic circle patterns on surfaces with boundary and cusps and established the existence of generalized circle patterns.

By replacing circles with rings, it follows from the discussion in Section \ref{Spherical ring patterns} that the curvature defined by equation (\ref{curvature}) fails to yield the rigidity for spherical orthogonal ring patterns.
In this paper, we introduce the modified combinatorial total geodesic curvature, and apply it to establish the rigidity of spherical orthogonal ring patterns on closed surfaces by variational principles.

\begin{theorem}\label{Thm of rigidity}
Let $S$ be a closed surface with a cellular decomposition $\Sigma$ in which every face has an even number of edges and every vertex has even degree. A spherical orthogonal ring pattern on the surface $S$ is uniquely determined by its modified combinatorial total geodesic curvature.
\end{theorem}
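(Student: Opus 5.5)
We argue variationally, in the same spirit as Nie's spherical circle patterns. Everything is local to one edge quadrilateral $O_iv_1O_jv_2$, which depends only on $(r_i,r_j)$ once $q$ is fixed, since $R_i,R_j$ are then determined by \eqref{equRr}. The first step is to pick a variable $u_i=u(r_i)$. Guided by Nie, the natural candidate is $u_i=\ln\tan(r_i/2)$, possibly adapted to the ring through $\cos R_i=q\cos r_i$; whichever choice makes the differential form below closed is the one we use. We then compute $\alpha_{i,\pm}$ explicitly from the two right triangles $\triangle O_iv_1O_j$ and $\triangle O_iv_2O_j$. Condition (C1) together with the spherical Pythagorean theorem gives
\[
\cos|O_iO_j|=\cos r_i\cos R_j=\cos R_i\cos r_j ,
\]
which is consistent by \eqref{equRr}, and for instance $\tan\alpha_{i,-}=\tan R_j/\sin r_i$ and $\tan\alpha_{i,+}=\tan r_j/\sin R_i$. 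From these formulas we obtain $L_{(e,i)}=\alpha_{i,-}\cos r_i+\alpha_{i,+}\cos R_i$ as an explicit function of $(r_i,r_j)$.

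The second step is to show that the edge form $\omega_e=L_{(e,i)}\,du_i+L_{(e,j)}\,du_j$ is closed, i.e.
\[
\frac{\partial L_{(e,i)}}{\partial u_j}=\frac{\partial L_{(e,j)}}{\partial u_i}.
\]
A Schläfli-type identity should help here: differentiating $L_{(e,i)}$ gives $d\alpha\cdot\cos r$ terms plus $\alpha\,d(\cos r)$ terms. The latter involve only $dr_i$, so they do not affect the symmetry of the cross derivatives. The cross terms $\cos r_i\,\partial\alpha_{i,-}/\partial r_j+\cos R_i\,\partial\alpha_{i,+}/\partial r_j$ should then be a symmetric expression in $i$ and $j$ after multiplication by the Jacobian of $u$. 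Once closedness holds, $\mathcal{E}_e(u_i,u_j)=\int\omega_e$ is well defined on the simply connected domain $u\in u((0,\pi/2))^2$. The global energy $\mathcal{E}(u)=\sum_e\mathcal{E}_e$ then satisfies $\partial\mathcal{E}/\partial u_i=L_i$, because each edge of $f_i$ contributes $L_{(e,i)}$. An edge may border the same face on both sides; in that case it contributes twice, and the argument is unchanged.

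The main obstacle is the third step: proving that each local Hessian
\[
\frac{\partial(L_{(e,i)},L_{(e,j)})}{\partial(u_i,u_j)}
\]
is positive definite, or at least positive semidefinite with diagonal entries strictly positive. This is precisely where the classical curvature $\Theta_i$ fails in Bobenko's analysis, so the weights $\cos r$ and $\cos R$ must be doing the work. We would first aim to show $\partial L_{(e,i)}/\partial u_j<0$ for $j\neq i$ and $\partial L_{(e,i)}/\partial u_i>\bigl|\partial L_{(e,i)}/\partial u_j\bigr|$. Equivalently, in analogy with Nie, we would show that $L_{(e,i)}+L_{(e,j)}$ is strictly increasing along the diagonal direction $(1,1)$. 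This would give strict diagonal dominance for each $2\times2$ block. Summing the blocks makes the global Hessian strictly diagonally dominant with positive diagonal, hence positive definite. If the inequalities prove messy, we would check them at the endpoint $q=1$, where they reduce to Nie's known computation, and control the $q$-dependence by monotonicity of $R$ in $q$.

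Finally, the domain of $u$ is a product of intervals and hence convex, and $\mathcal{E}$ is strictly convex on it. The gradient map $\nabla\mathcal{E}:u\mapsto(L_1,\dots,L_{|F|})$ of a strictly convex function on a convex domain is injective: if $\nabla\mathcal{E}(u)=\nabla\mathcal{E}(u')$ with $u\ne u'$, then $\langle\nabla\mathcal{E}(u)-\nabla\mathcal{E}(u'),u-u'\rangle>0$, a contradiction. Since the ring pattern is determined by the radii $r$, and hence by $u$, the modified combinatorial total geodesic curvature determines the pattern. This proves Theorem \ref{Thm of rigidity}.
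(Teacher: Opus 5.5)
Your architecture matches the paper's: a closed edge $1$-form in a face variable $u_i(r_i)$, edge potentials summed into $\mathcal{E}$, strict diagonal dominance, and injectivity of the gradient. However, both computational steps that carry the content are left open, and the plan for the decisive one is wrong as stated.

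\emph{Closedness.} The candidate $u=\ln\tan(r/2)$ already fails at $q=1$. There $\partial L_{(e,i)}/\partial r_j=\sin 2r_i/\sin^2 l_{ij}$, which forces $du/dr\propto 1/\sin 2r$. In general you must compute
\[
\frac{\partial L_{(e,i)}}{\partial r_j}=\frac{\cos R_i}{\sin R_j}\cdot\frac{\sin r_i\sin r_j+\sin R_i\sin R_j}{\sin^2 l_{ij}}
\]
and observe that the ratio of the two cross derivatives is $\sin 2R_i/\sin 2R_j$. That separation is what lets a single face variable, $du_i/dr_i=1/\sin 2R_i$, close every edge form simultaneously (Lemma~\ref{1-form}). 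Saying you will use ``whichever choice makes the form closed'' presupposes that such a choice exists.

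\emph{Definiteness.} This is the genuine gap, and it has three parts.

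(i) Your signs are reversed. From $\cot\alpha_{i,-}=\cot R_j\sin r_i$ and $\cot\alpha_{i,+}=\cot r_j\sin R_i$, $L_{(e,i)}$ is strictly increasing in $r_j$ and strictly decreasing in $r_i$. So for any $u$ increasing in $r$, the block has positive off-diagonal and negative diagonal entries, and the inequalities you set out to prove are false. The block is negative definite, which is why the paper integrates $-L$.

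(ii) Monotonicity of $L_{(e,i)}+L_{(e,j)}$ along $(1,1)$ is not equivalent to diagonal dominance; it only controls $\mathbf{1}^{T}M\mathbf{1}$. Since the block $M$ is symmetric, its row sums are $\partial(L_{(e,i)}+L_{(e,j)})/\partial u_i$ and $\partial(L_{(e,i)}+L_{(e,j)})/\partial u_j$. You need each of these to be negative separately.

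(iii) Most importantly, you give no mechanism for proving this, and the fallback does not work. The case $q=1$ is just Nie's case, and a sign there says nothing about $q<1$ unless you control the row sums themselves as functions of $q$. Monotonicity of $R$ in $q$ does not provide that.

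The missing idea is Gauss--Bonnet applied to the two half-lenses cut by $O_iO_j$. One half-lens is where the disk of radius $r_i$ about $O_i$ meets the disk of radius $R_j$ about $O_j$; the other is the swapped pair. Their circular arcs contribute exactly the four terms of $L_{(e,i)}+L_{(e,j)}$, and their three corners are right angles. This gives $A_\theta+\alpha_{i,-}\cos r_i+\alpha_{j,+}\cos R_j=\tfrac{\pi}{2}$ and its companion, hence $L_{(e,i)}+L_{(e,j)}=\pi-A_\theta-B_\theta$.

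Diagonal dominance then reduces to each half-lens area being strictly increasing in each radius. The paper proves this with an area-variation formula (Corollary~\ref{area}, Lemma~\ref{pd}):
\[
\frac{\partial A_\theta}{\partial r_i}=\tfrac12\sin r_i\,(2\alpha_{i,-}-\sin 2\alpha_{i,-})>0,\qquad
\frac{\partial A_\theta}{\partial r_j}=\tfrac q2\sin r_j\,(2\alpha_{j,+}-\sin 2\alpha_{j,+})>0,
\]
and likewise for $B_\theta$.
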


\begin{remark}
In \cite{B}, Bobenko \cite{B} established the variational principles for spherical orthogonal
ring patterns.
However, the rigidity was not obtained therein, since the Hessian matrix of the potential function associated with the traditional curvature introduced in \cite{B} is not positive definite or negative definite.
In this paper, we prove the rigidity of spherical orthogonal
ring patterns with respect to the modified combinatorial total geodesic curvature, thereby filling this gap.
In the case of traditional notion of discrete curvature, Izmestiev, Prosanov and Wu \cite{I-P-W} proved the existence and
rigidity of vertex scalings of convex spherical cone-metrics with prescribed positive curvature on the sphere. This raises two natural further questions. First, can the traditional notion of discrete curvature be used to prove the existence and rigidity of spherical circle patterns? Second, can one prove the existence and rigidity of spherical ring patterns using the traditional notion of discrete curvature?
\end{remark}

\subsection{Organization of the paper.} In Section \ref{Configurations of orthogonal rings}, we derive several key properties of orthogonal rings and establish a number of lemmas. In Section \ref{Rigidity of spherical orthogonal ring patterns}, we construct a convex function and provide a proof of Theorem \ref{Thm of rigidity}. In Section \ref{Combinatorial curvature flows of spherical orthogonal ring patterns}, we introduce the combinatorial Ricci flow and the combinatorial Calabi flow for spherical orthogonal ring patterns and establish the local convergence of these combinatorial curvature flows.
\\
\\
\textbf{Acknowledgements}\\[8pt]
The authors are supported by the National Natural Science Foundation of China under Grant No. 12471057.

\section{Configurations of orthogonal rings}\label{Configurations of orthogonal rings}

Given $r_i,r_j\in (0,\frac{\pi}{2})$ and $q\in (0,1]$, it follows from the spherical law of cosines that there exists a unique spherical quadrilateral $O_iv_1O_jv_2$ satisfying conditions (C1) and (C2) given in Section \ref{Spherical ring patterns}.

By applying the variable substitutions
\begin{equation}\label{u_i}
	u_i=\int_{r_0}^{r_i}\frac{\mathrm{d}t}{2q\cos t\sqrt{1-q^2\cos^2t}}
\end{equation}
for each $i\in \{1,\cdots, |F|\}$, we conclude that the admissible space $U$ for the spherical orthogonal ring pattern on the closed surface is a convex set. Here, $r_0$ is a constant in the interval $(0,\frac{\pi}{2})$. Indeed, we have
\begin{equation*}
	\frac{\mathrm{d} u_i}{\mathrm{d} r_i}=\frac{1}{\sin 2R_i},
\end{equation*}
which implies that $u_i$ increases with $r_i$. Since $r_i\in (0,\frac{\pi}{2})$,  it follows that $U$ is convex.

\begin{lemma}\label{1-form}
For the spherical quadrilateral $O_iv_1O_jv_2$, the 1-form $\omega=L_{(e,i)}\mathrm{d}u_i+L_{(e,j)}\mathrm{d}u_j$ is closed. In particular,
\[ \frac{\partial L_{(e,i)}}{\partial u_j}=\frac{\partial L_{(e,j)}}{\partial u_i}=2q\cos l_{ij}\cdot\frac{\sin r_i\sin r_j+\sin R_i\sin R_j}{\sin^2l_{ij}}>0. \]
\end{lemma}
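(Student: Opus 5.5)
The plan is a direct computation in spherical trigonometry. Cutting the quadrilateral $O_iv_1O_jv_2$ along the diagonal $O_iO_j$ of length $l_{ij}$ produces two right triangles. In $\triangle O_iv_1O_j$ the legs are $r_i$ and $R_j$; in $\triangle O_iv_2O_j$ the legs are $R_i$ and $r_j$. The spherical Pythagorean theorem together with (\ref{equRr}) gives
\[
\cos l_{ij}=\cos r_i\cos R_j=\cos R_i\cos r_j=q\cos r_i\cos r_j ,
\]
which confirms that the two triangles share the same hypotenuse. The standard right-triangle relations then give
\[
\tan\alpha_{i,-}=\frac{\tan R_j}{\sin r_i},\qquad \tan\alpha_{i,+}=\frac{\tan r_j}{\sin R_i}.
\]

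\textbf{Step 1: derivatives of the angles.} In $L_{(e,i)}=\alpha_{i,-}\cos r_i+\alpha_{i,+}\cos R_i$, the variable $r_j$ enters only through the two angles. Differentiating the arctangent expressions gives
\[
\frac{\partial\alpha_{i,-}}{\partial R_j}=\frac{\sin r_i}{\sin^2 r_i\cos^2R_j+\sin^2R_j}=\frac{\sin r_i}{1-\cos^2r_i\cos^2R_j}=\frac{\sin r_i}{\sin^2 l_{ij}} .
\]
In the same way, $\partial\alpha_{i,+}/\partial r_j=\sin R_i/\sin^2 l_{ij}$. Differentiating (\ref{equRr}) gives $\sin R_j\,\mathrm{d}R_j=q\sin r_j\,\mathrm{d}r_j$.

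\textbf{Step 2: change of variables.} Combining Step 1 with $\mathrm{d}r_j/\mathrm{d}u_j=\sin 2R_j=2\sin R_j\cos R_j$ yields
\[
\frac{\partial L_{(e,i)}}{\partial u_j}=\frac{2\cos R_j\big(q\cos r_i\sin r_i\sin r_j+\cos R_i\sin R_i\sin R_j\big)}{\sin^2 l_{ij}} .
\]
Replacing $q\cos r_i$ by $\cos R_i$ factors out $\cos R_i\cos R_j$. This product equals $q^2\cos r_i\cos r_j=q\cos l_{ij}$, which gives the stated formula
\[
\frac{\partial L_{(e,i)}}{\partial u_j}=2q\cos l_{ij}\cdot\frac{\sin r_i\sin r_j+\sin R_i\sin R_j}{\sin^2 l_{ij}} .
\]
This expression is symmetric under $i\leftrightarrow j$, because interchanging $i$ and $j$ swaps the roles of the two triangles. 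Hence it also equals $\partial L_{(e,j)}/\partial u_i$.

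\textbf{Step 3: positivity and closedness.} Positivity holds because $q>0$ and all of $r_i,r_j,R_i,R_j,l_{ij}$ lie in $(0,\pi/2)$. Since the two mixed partials agree, $\mathrm{d}\omega=0$. The domain in $(u_i,u_j)$ is convex and hence simply connected, so $\omega$ is closed there (indeed exact).

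The only place needing care is the simplification in Step 1. There one must use $\cos l_{ij}=\cos r_i\cos R_j$ to recognize the denominator as $\sin^2 l_{ij}$, and then recognize $\cos R_i\cos R_j=q\cos l_{ij}$ to make the symmetry visible. Everything else is routine.
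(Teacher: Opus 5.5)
Your proof is correct and follows essentially the same route as the paper: you use the right-triangle formulas for $\alpha_{i,\pm}$ (the paper writes them as $\cot\alpha_{i,-}=\cot R_j\sin r_i$, $\cot\alpha_{i,+}=\cot r_j\sin R_i$), differentiate in $r_j$, change variables via $\mathrm{d}r_j/\mathrm{d}u_j=\sin 2R_j$, and use $\cos R_i\cos R_j=q\cos l_{ij}$ to reach the final form. The differences are cosmetic: you get the $\sin^2 l_{ij}$ denominator directly from $\cos l_{ij}=\cos r_i\cos R_j$ rather than from the law of sines, and you obtain $\partial L_{(e,j)}/\partial u_i$ from the $i\leftrightarrow j$, $v_1\leftrightarrow v_2$ symmetry of the quadrilateral (which is legitimate) instead of repeating the computation as the paper does.
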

\begin{proof}
From equation (\ref{equRr}), we have
$\cos R_i=q\cos r_i$ and $\cos R_j=q\cos r_j$, where $0<q\le 1$. Differentiating these relations yields
\begin{equation*}
\dfrac{\mathrm{d} R_i}{\mathrm{d} r_i}=q\dfrac{\sin r_i}{\sin R_i},\quad \dfrac{\mathrm{d} R_j}{\mathrm{d} r_j}=q\dfrac{\sin r_j}{\sin R_j}.
\end{equation*}

Considering the spherical triangle $\bigtriangleup O_iv_1O_j$, the spherical laws of sines and cosines in \cite{J-G} yield the following relations
\begin{equation}\label{sin law}
	\frac{\sin r_i}{\sin \alpha_{j,+}}=\frac{\sin R_j}{\sin \alpha_{i,-}}=\frac{\sin l_{ij}}{\sin \frac{\pi}{2}},
\end{equation}
\begin{equation}\label{cos law1}
	\cos \alpha_{i,-} = \frac{\cos R_j- \cos r_i \cos l_{ij}}{\sin r_i \sin l_{ij}},
\end{equation}
\begin{equation}\label{cos law2}
	\cos l_{ij}=\cos r_i\cos R_j+\sin r_i\sin R_j\cos\frac{\pi}{2},
\end{equation}
where $l_{ij}$ denotes the length of the geodesic segment $O_iO_j$.

Substituting the equation (\ref{cos law2}) and the second relation of equation (\ref{sin law}) into the equation (\ref{cos law1}) yields
\begin{equation}\label{alphai-}
	\cot\alpha_{i,-}=\cot R_j\sin r_i.
\end{equation}
Similarly, we obtain
\begin{equation}\label{alphaj+}
	\cot\alpha_{j,+}=\cot r_i\sin R_j.
\end{equation}
Taking the partial derivative of the equation (\ref{alphai-}) with respect to $r_j$ and the partial derivative of the equation (\ref{alphaj+}) with respect to $r_i$ yields
\begin{equation}\label{equ1}
	-\frac{1}{\sin^2\alpha_{i,-}}\frac{\partial \alpha_{i,-}}{\partial r_j}=-\frac{\sin r_i}{\sin^2 R_j}\cdot q\dfrac{\sin r_j}{\sin R_j} \Rightarrow \frac{\partial \alpha_{i,-}}{\partial r_j}=\frac{q\sin r_i\sin r_j}{\sin R_j\sin^2l_{ij}}=\frac{q\sin r_j\sin\alpha_{j,+}}{\sin R_j\sin l_{ij}} ,
\end{equation}
\begin{equation}\label{equ2}
	-\frac{1}{\sin^2\alpha_{j,+}}\frac{\partial \alpha_{j,+}}{\partial r_i}=-\frac{\sin R_j}{\sin^2r_i} \Rightarrow \frac{\partial \alpha_{j,+}}{\partial r_i}=\frac{\sin R_j}{\sin^2l_{ij}}=\frac{\sin\alpha_{i,-}}{\sin l_{ij}} .
\end{equation}

Now, consider the spherical triangle $\bigtriangleup O_iv_2O_j$. By similar arguments, we obtain
$$\cot\alpha_{i,+}=\cot r_j\sin R_i, \ \cot\alpha_{j,-}=\cot R_i\sin r_j.$$
Furthermore, by direct computations, we have
\begin{equation*}
	\frac{\partial \alpha_{i,+}}{\partial r_j}=\frac{\sin R_i}{\sin^2 l_{ij}}=\frac{\sin\alpha_{j,-}}{\sin l_{ij}} ,\quad
	\frac{\partial \alpha_{j,-}}{\partial r_i}=\frac{q\sin r_i\sin r_j}{\sin R_i\sin^2l_{ij}}=\frac{q\sin r_i\sin\alpha_{i,+}}{\sin R_i\sin l_{ij}} .
\end{equation*}

Computing the partial derivatives $\frac{\partial L_{(e,i)}}{\partial u_j}$ and $\frac{\partial L_{(e,j)}}{\partial u_i}$, we obtain
\begin{align*}
	\frac{\partial L_{(e,i)}}{\partial u_j}&= \sin 2R_j\frac{\partial L_{(e,i)}}{\partial r_j}\\
	&=\sin 2R_j(\frac{\partial \alpha_{i,-}}{\partial r_j}\cos r_i+\frac{\partial \alpha_{i,+}}{\partial r_j}\cos R_i) \\
	&=\sin 2R_j(\frac{q\sin r_i\sin r_j}{\sin R_j\sin^2 l_{ij}}\cdot\cos r_i+\frac{\sin R_i}{\sin^2 l_{ij}}\cdot\cos R_i)\\
	&=2\sin R_j\cos R_j\cdot\frac{\cos R_i}{\sin R_j}\cdot\frac{\sin r_i\sin r_j+\sin R_i\sin R_j}{\sin^2l_{ij}}\\
	&=2q\cos l_{ij}\cdot\frac{\sin r_i\sin r_j+\sin R_i\sin R_j}{\sin^2l_{ij}},
\end{align*}

\begin{align*}
	\frac{\partial L_{(e,j)}}{\partial u_i}&= \sin 2R_i\frac{\partial L_{(e,j)}}{\partial r_i}\\
	&=\sin 2R_i(\frac{\partial \alpha_{j,+}}{\partial r_i}\cos R_j+\frac{\partial \alpha_{j,-}}{\partial r_i}\cos r_j) \\
	&=\sin 2R_i(\frac{\sin R_j}{\sin^2 l_{ij}}\cdot\cos R_j+\frac{q\sin r_i\sin r_j}{\sin R_i\sin^2 l_{ij}}\cdot\cos r_j)\\
	&=2\sin R_i\cos R_i\cdot\frac{\cos R_j}{\sin R_i}\cdot\frac{\sin r_i\sin r_j+\sin R_i\sin R_j}{\sin^2l_{ij}}\\
	&=2q\cos l_{ij}\cdot\frac{\sin r_i\sin r_j+\sin R_i\sin R_j}{\sin^2l_{ij}}\\
	&=\frac{\partial L_{(e,i)}}{\partial u_j}>0.
\end{align*}
Thus, $\omega=L_{(e,i)}\mathrm{d}u_i+L_{(e,j)}\mathrm{d}u_j$ is a closed form and $\frac{\partial L_{(e,i)}}{\partial u_j}=\frac{\partial L_{(e,j)}}{\partial u_i}>0$.
\end{proof}

\begin{figure}[htbp]
	\centering
	\includegraphics[scale=1]{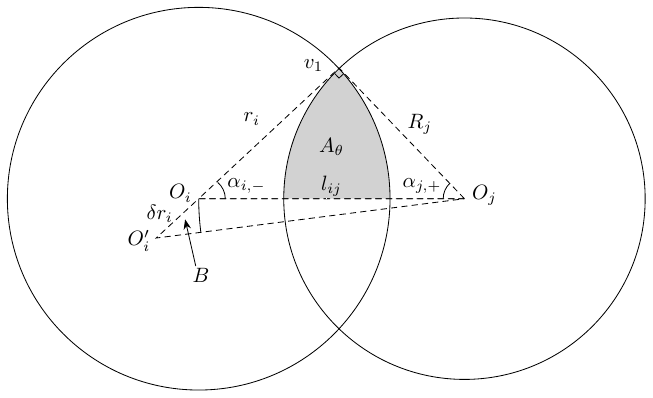}
	\caption{Illustration of $A_{\theta}$ and the variation of  $\triangle O_iv_1O_j$ under a small change}
	\label{Figure 2}
\end{figure}

In \cite{G-T}, Glickenstein and Thomas proved the variational formulas of the area and angles of triangles generated by discrete conformal structures on surfaces. Glickenstein \cite{Glickenstein} also employed the variational formulas to prove rigidity theorems for certain analogs of constant curvature and Einstein manifolds in the piecewise flat setting.  Here we only state the formula for spherical conformal structure as follows.

\begin{proposition}[\cite{G-T}, Proposition 9]\label{G-T formula}
	Given a spherical conformal structure, for any spherical triangle $\{i,j,k\}$ generated by the spherical conformal structure, we have
	\begin{equation*}
		\frac{\partial A_{ijk}}{\partial f_k}=\frac{\partial \gamma_i}{\partial f_k}(1-\cos l_{ik})+\frac{\partial \gamma_j}{\partial f_k}(1-\cos l_{jk}).
	\end{equation*}
	Here $A_{ijk}$ is the area of the triangle, $f=(f_i,f_j,f_k)$ is a function defined on  the vertices of the triangle, and $\gamma_i$, $\gamma_j$, $\gamma_k$ denote the inner angles at vertices $i$, $j$ and $k$ of the spherical triangle, respectively.
\end{proposition}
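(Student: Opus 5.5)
The plan is to verify the formula by direct computation, combining Gauss--Bonnet with the first-order variation formulas for the angles of a spherical triangle. Throughout, $l_{ij}$ is held fixed, because it depends only on $f_i,f_j$.

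\textbf{Step 1: reduce to an identity between angle derivatives.} By Gauss--Bonnet, $A_{ijk}=\gamma_i+\gamma_j+\gamma_k-\pi$. Hence the claimed formula is equivalent to
\begin{equation*}
\frac{\partial \gamma_k}{\partial f_k}+\cos l_{ik}\,\frac{\partial \gamma_i}{\partial f_k}+\cos l_{jk}\,\frac{\partial \gamma_j}{\partial f_k}=0 .
\end{equation*}
Varying $f_k$ changes only $l_{ik}$ and $l_{jk}$. By the chain rule, the left-hand side therefore equals $X\,\partial l_{ik}/\partial f_k+Y\,\partial l_{jk}/\partial f_k$, where
\begin{equation*}
X=\frac{\partial(\gamma_k+\cos l_{ik}\gamma_i+\cos l_{jk}\gamma_j)}{\partial l_{ik}},\qquad Y=\frac{\partial(\gamma_k+\cos l_{ik}\gamma_i+\cos l_{jk}\gamma_j)}{\partial l_{jk}},
\end{equation*}
with each of $\gamma_i,\gamma_j,\gamma_k$ regarded as a function of the three edge lengths.

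\textbf{Step 2: compute $X$ and $Y$.} I would use the standard derivatives obtained by differentiating the spherical cosine law. These are $\partial\gamma_i/\partial l_{jk}=\sin l_{jk}/(\sin l_{ij}\sin l_{ik}\sin\gamma_i)$ for the opposite edge, and $\partial\gamma_i/\partial l_{ik}=-\cos\gamma_j\,\partial\gamma_i/\partial l_{jk}$ (the angle $\gamma_j$ being opposite $l_{ik}$), together with their permutations. Substituting them and applying the spherical sine law should reduce $X$ and $Y$ to simple closed forms, with a common factor like $1/(\sin l_{ij}\sin\gamma_k)$ times a trigonometric expression in the edges and angles. Note that the required relation is a single linear condition on the two-dimensional space of variations $(dl_{ik},dl_{jk})$. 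So it cannot hold for arbitrary variations, and the conformal structure must be used.

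\textbf{Step 3: insert the conformal structure.} A spherical conformal structure prescribes, in the form used by Glickenstein--Thomas,
\begin{equation*}
\cos l_{ik}=\frac{4\eta_{ik}e^{f_i+f_k}+(1-\varepsilon_ie^{2f_i})(1-\varepsilon_ke^{2f_k})}{(1+\varepsilon_ie^{2f_i})(1+\varepsilon_ke^{2f_k})}.
\end{equation*}
Differentiating gives $\partial l_{ik}/\partial f_k$ as an explicit function of $f_i,f_k$ and $\cos l_{ik}$. Equivalently, it gives the quantity $d_{ki}$, the signed distance from vertex $k$ to the foot of the ``power'' perpendicular on edge $ik$. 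The main obstacle is the final trigonometric identity $X\,d_{ki}+Y\,d_{kj}=0$, with the $d$'s suitably normalized.

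I would attack this identity geometrically rather than by brute force. The defining property of a conformal structure (Glickenstein--Thomas) is that the perpendiculars to edges $ik$ and $jk$ at the points determined by $d_{ki}$ and $d_{kj}$ meet at a common center $c_{ijk}$. The derivatives $\partial l_{ik}/\partial f_k$ and $\partial l_{jk}/\partial f_k$ are then proportional to the cosines of the angles that the segment from $k$ to this center makes with the two edges. In other words, varying $f_k$ moves vertex $k$ infinitesimally in the direction of the center, and the formula should become the first variation of area for that motion. That first variation is exactly $(1-\cos l_{ik})\,d\gamma_i+(1-\cos l_{jk})\,d\gamma_j$, because the area swept by rotating an edge of length $l$ about its far endpoint by $d\gamma$ is $(1-\cos l)\,d\gamma$. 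If the geometric route becomes delicate (for example, when the center lies outside the triangle), I would fall back on checking the identity symbolically. For that, I would write $X$ and $Y$ in terms of the cosines of the edge lengths and clear denominators using the explicit formula above.
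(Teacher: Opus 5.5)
Your Step 1 reduction is correct. Step 2, however, contains a misstated formula and a false conclusion, and Step 3 rests on both.

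\textbf{The misstated derivative.} The correct formula is $\partial\gamma_i/\partial l_{ik}=-\cos\gamma_k\,\partial\gamma_i/\partial l_{jk}$. The cosine is of the angle at the vertex $k$, where the opposite edge $l_{jk}$ meets the varied edge $l_{ik}$. It is not $\cos\gamma_j$, the angle opposite $l_{ik}$; for instance, the derivative vanishes exactly when $\gamma_k=\pi/2$.

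\textbf{The correct computation gives $X=Y=0$.} Put $D=\sin l_{ij}\sin l_{ik}\sin\gamma_i$, which is symmetric in the three vertices by the sine law. Treat $\cos l_{ik},\cos l_{jk}$ in $X$ as frozen coefficients; as literally written, your $X$ would also pick up a term $-\gamma_i\sin l_{ik}$. Differentiating $\cos l_{ij}=\cos l_{ik}\cos l_{jk}+\sin l_{ik}\sin l_{jk}\cos\gamma_k$ gives $D\,\partial\gamma_k/\partial l_{ik}=\cos l_{ik}\sin l_{jk}\cos\gamma_k-\sin l_{ik}\cos l_{jk}$. Together with $D\,\partial\gamma_i/\partial l_{ik}=-\sin l_{jk}\cos\gamma_k$ and $D\,\partial\gamma_j/\partial l_{ik}=\sin l_{ik}$, this yields
\[
D\,X=(\cos l_{ik}\sin l_{jk}\cos\gamma_k-\sin l_{ik}\cos l_{jk})-\cos l_{ik}\sin l_{jk}\cos\gamma_k+\cos l_{jk}\sin l_{ik}=0,
\]
and $Y=0$ by the symmetry $i\leftrightarrow j$.

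\textbf{Consequence for Steps 2 and 3.} The relation of Step 1 therefore holds for \emph{every} variation of $(l_{ik},l_{jk})$ with $l_{ij}$ fixed. Your claim that it ``cannot hold for arbitrary variations, and the conformal structure must be used'' is false. The conformal structure contributes only one fact: $l_{ij}$ depends on $f_i,f_j$ alone, so $\partial l_{ij}/\partial f_k=0$. The explicit formula for $\cos l_{ik}$, the quantities $d_{ki},d_{kj}$, and the center $c_{ijk}$ play no role. As written, Step 3 would have you verify $X\,d_{ki}+Y\,d_{kj}=0$ with spurious nonzero $X,Y$ coming from the wrong derivative. You also leave this ``main obstacle'' unresolved, so the proposal does not close.

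\textbf{The repair.} It is already in your last paragraph, once you drop the restriction to motion toward the center. For \emph{any} infinitesimal displacement of $k$ with $i,j$ fixed, the area changes by the two thin regions swept by the edges $ik$ and $jk$ about $i$ and $j$. Each region is a sector of area $(1-\cos l)\,\delta\gamma$ plus a sliver of area $O(\delta l\,\delta\gamma)=o(|\delta|)$ coming from the change in edge length. Hence $\delta A=(1-\cos l_{ik})\delta\gamma_i+(1-\cos l_{jk})\delta\gamma_j+o(|\delta|)$ regardless of direction, and the case of $c_{ijk}$ outside the triangle never arises.

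\textbf{Comparison with the paper.} The paper does not prove this proposition; it quotes it from Glickenstein--Thomas. Its proof of Corollary \ref{area}, though, is exactly this argument in a special case. There $O_i$ is moved along $v_1O_i$, and the area changes by a sector $\delta\alpha_{j,+}(1-\cos l_{ij})$ plus a region $B$ with $B/\delta r_i\to 0$.
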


Following the approach of Proposition \ref{G-T formula} with appropriate modifications, we establish the following corollary.

\begin{corollary}\label{area}
	If \(A\) denotes the area of the triangle $\triangle O_iv_1O_j$, then we have
	\[ \frac{\partial A}{\partial r_i}=\frac{\partial\alpha_{j,+}}{\partial r_i}(1-\cos l_{ij}),   \quad \frac{\partial A}{\partial R_j}=\frac{\partial\alpha_{i,-}}{\partial R_j}(1-\cos l_{ij}). \]
\end{corollary}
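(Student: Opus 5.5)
Here the triangle $\triangle O_iv_1O_j$ is viewed as a function of the two side lengths $r_i=|O_iv_1|$ and $R_j=|O_jv_1|$, which we treat as independent variables. The angle at $v_1$ stays fixed at $\frac{\pi}{2}$. I would prove the corollary by direct computation, using the Gauss--Bonnet formula rather than the full machinery of Proposition \ref{G-T formula}. Since the triangle is spherical,
\[
A=\alpha_{i,-}+\alpha_{j,+}+\frac{\pi}{2}-\pi=\alpha_{i,-}+\alpha_{j,+}-\frac{\pi}{2},
\]
so $\frac{\partial A}{\partial r_i}=\frac{\partial \alpha_{i,-}}{\partial r_i}+\frac{\partial \alpha_{j,+}}{\partial r_i}$. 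The claim is therefore equivalent to the identity
\[
\frac{\partial \alpha_{i,-}}{\partial r_i}=-\cos l_{ij}\,\frac{\partial \alpha_{j,+}}{\partial r_i},
\]
and symmetrically for the derivative in $R_j$. This is the analogue of the ``$\partial\gamma_k/\partial f_k$'' relation in the Glickenstein--Thomas argument.

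For the first identity I would use the formulas already established in the proof of Lemma \ref{1-form}:
\[
\cot\alpha_{i,-}=\cot R_j\sin r_i,\qquad \cot\alpha_{j,+}=\cot r_i\sin R_j,\qquad \cos l_{ij}=\cos r_i\cos R_j .
\]
Equation (\ref{equ2}) already gives $\frac{\partial\alpha_{j,+}}{\partial r_i}=\frac{\sin R_j}{\sin^2 l_{ij}}$, and that computation holds with $R_j$ held fixed. Differentiating the first relation in $r_i$ with $R_j$ fixed gives
\[
-\frac{1}{\sin^2\alpha_{i,-}}\frac{\partial\alpha_{i,-}}{\partial r_i}=\cot R_j\cos r_i .
\]
Substituting $\sin\alpha_{i,-}=\frac{\sin R_j}{\sin l_{ij}}$ from (\ref{sin law}), we get
\[
\frac{\partial\alpha_{i,-}}{\partial r_i}=-\frac{\sin R_j\cos R_j\cos r_i}{\sin^2 l_{ij}}=-\cos l_{ij}\,\frac{\sin R_j}{\sin^2 l_{ij}},
\]
which is exactly $-\cos l_{ij}\,\frac{\partial\alpha_{j,+}}{\partial r_i}$. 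Adding the two derivatives then gives $\frac{\partial A}{\partial r_i}=\frac{\partial\alpha_{j,+}}{\partial r_i}(1-\cos l_{ij})$.

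The second formula follows by the symmetry $r_i\leftrightarrow R_j$, $\alpha_{j,+}\leftrightarrow\alpha_{i,-}$, which swaps the two defining cotangent relations. Concretely:
\begin{itemize}
\item differentiating $\cot\alpha_{i,-}=\cot R_j\sin r_i$ in $R_j$ gives $\frac{\partial\alpha_{i,-}}{\partial R_j}=\frac{\sin r_i}{\sin^2 l_{ij}}$;
\item differentiating $\cot\alpha_{j,+}=\cot r_i\sin R_j$ in $R_j$ gives $\frac{\partial\alpha_{j,+}}{\partial R_j}=-\cos l_{ij}\frac{\sin r_i}{\sin^2l_{ij}}$;
\item summing these yields the stated formula.
\end{itemize}

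The main point needing care is bookkeeping rather than difficulty. The partial derivatives must be taken with $(r_i,R_j)$ as the independent coordinates of this triangle, and not with $R_j$ tied to $r_j$ via (\ref{equRr}). This is why the second formula is stated with $\partial/\partial R_j$ and not $\partial/\partial r_j$, and it keeps the derivatives from (\ref{equ1}) (which involve a factor $q$) from being misused. One must also check the sign conventions in the Gauss--Bonnet angle-excess formula, using that $\alpha_{i,-},\alpha_{j,+}\in(0,\frac{\pi}{2})$ since $r_i,R_j<\frac{\pi}{2}$.
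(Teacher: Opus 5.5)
Your proof is correct, but it takes a different route from the paper's and runs the logic in the opposite direction.

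The paper proves the corollary geometrically, in the spirit of Glickenstein--Thomas:
\begin{itemize}
\item it moves $O_i$ along the geodesic $v_1O_i$ with $v_1,O_j$ fixed;
\item it writes the area increment as a spherical sector centered at $O_j$, of radius $l_{ij}$ and angle $\delta\alpha_{j,+}$ (area $\delta\alpha_{j,+}(1-\cos l_{ij})$), plus a sliver $B=O(\delta r_i^2)$;
\item only afterwards, in Lemma \ref{pd}, it combines this with $A=\alpha_{i,-}+\alpha_{j,+}-\frac{\pi}{2}$ to extract $\frac{\partial\alpha_{i,-}}{\partial r_i}=-\cos l_{ij}\frac{\partial\alpha_{j,+}}{\partial r_i}$.
\end{itemize}
You instead verify that angle identity directly from the right-triangle relations $\cot\alpha_{i,-}=\cot R_j\sin r_i$, $\cot\alpha_{j,+}=\cot r_i\sin R_j$ and $\cos l_{ij}=\cos r_i\cos R_j$. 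You then deduce the corollary from the angle-excess formula. Your derivatives all check out, including $\frac{\partial\alpha_{i,-}}{\partial R_j}=\frac{\sin r_i}{\sin^2 l_{ij}}$ and $\frac{\partial\alpha_{j,+}}{\partial R_j}=-\cos l_{ij}\frac{\sin r_i}{\sin^2 l_{ij}}$. Your caution about treating $(r_i,R_j)$ as the independent variables of this triangle is also appropriate.

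Your route buys rigor and economy. It avoids having to bound the sliver $B$ and to handle the one-sided limit for $\delta r_i<0$ separately, which the paper dismisses as ``analogous.'' It also makes the corollary a by-product of the identity that Lemma \ref{pd} actually uses.

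The paper's route buys a conceptual explanation of the factor $1-\cos l_{ij}$ as the area of a unit-angle spherical sector of radius $l_{ij}$. It also yields an argument that needs no explicit formulas for the angles, so it parallels the general Glickenstein--Thomas variational formula rather than depending on the special right-angled geometry at $v_1$.
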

\begin{proof}
	Since the area $A$ of the triangle $\triangle O_iv_1O_j$ is determined by $r_i$ and $R_j$, we first consider a variation $\delta r_i$ in the length of $r_i$ while keeping $R_j$ fixed. We only consider the case $\delta r_i>0$, as the case for $\delta r_i<0$ is analogous. This corresponds to moving the point $O_i$ along the direction of $v_1O_i$ with $v_1$ and $O_j$ fixed, as shown in Figure \ref{Figure 2}.
	The change in area $A$ is equal to the area of a circular sector plus  the area of another small region, denoted as $B$.
	
	Figure \ref{Figure 2} shows that $B$ is smaller than the area of a spherical sector of radius $\delta r_i$ and central angle $\alpha_{i,-}+\delta \alpha_{i,-}$, hence we have $B \le (\alpha_{i,-}+\delta \alpha_{i,-})(1-\cos \delta r_i)$.
	As $\delta r_i$ tends to $0^+$, we obtain
	\begin{align*}
		\lim_{\delta r_i\to 0^+}\frac{B}{\delta r_i}\le \lim_{\delta r_i\to 0^+}\frac{(\alpha_{i,-}+\delta \alpha_{i,-})(1-\cos \delta r_i)}{\delta r_i}.
	\end{align*}
	
	Since $\alpha_{i,-}+\delta \alpha_{i,-}$ is finite and $\lim_{\delta r_i\to 0^+}\frac{1-\cos\delta r_i}{\delta r_i}=0$, the right-hand side of the above inequality equals $0$, from which it follows that $\lim_{\delta r_i\to 0^+}\frac{B}{\delta r_i}=0$.
	Then we have
	\begin{align*}
		\lim_{\delta r_i\to 0^+}\frac{A(r_i+\delta r_i)-A(r_i)}{\delta r_i}
		&=\lim_{\delta r_i\to 0^+}\frac{\delta \alpha_{j,+}(1-\cos l_{ij})+B}{\delta r_i}\\
		&=\frac{\partial \alpha_{j,+}}{\partial r_i}(1-\cos l_{ij})+\lim_{\delta r_i\to 0^+}\frac{B}{\delta r_i}\\
		&=\frac{\partial \alpha_{j,+}}{\partial r_i}(1-\cos l_{ij}).
	\end{align*}	
	When $\delta r_i<0$, the left-hand limit also satisfies
$$\lim_{\delta r_i\to 0^-}\frac{A(r_i+\delta r_i)-A(r_i)}{\delta r_i}=\frac{\partial \alpha_{j,+}}{\partial r_i}(1-\cos l_{ij}).$$
	Combined with the result for the right-hand limit, this confirms that $$\frac{\partial A}{\partial r_i}=\lim_{\delta r_i\to 0}\frac{A(r_i+\delta r_i)-A(r_i)}{\delta r_i}=\frac{\partial \alpha_{j,+}}{\partial r_i}(1-\cos l_{ij}).$$
	Applying the same arguments to the point $O_j$, moving it along the direction of $v_1O_j$ with $v_1$ and $O_i$ fixed, we have
	\[ \frac{\partial A}{\partial R_j}=\frac{\partial\alpha_{i,-}}{\partial R_j}(1-\cos l_{ij}). \]
	This completes the proof of the corollary.
\end{proof}

By applying Corollary \ref{area} , we can obtain the following result.

\begin{lemma}\label{pd}
For the spherical triangle $\triangle O_iv_1O_j$, the following equations hold:
\begin{enumerate}
	\item[(1)] $\dfrac{\partial A_{\theta}}{\partial r_i}=\dfrac{1}{2}\sin r_i(2\alpha_{i,-}-\sin 2\alpha_{i,-})$,
	\item[(2)] $\dfrac{\partial A_{\theta}}{\partial r_j}
	=\dfrac{q}{2}\sin r_j(2\alpha_{j,+}-\sin 2\alpha_{j,+})$,
\end{enumerate}
where $A_{\theta}$ denotes a half of the area of the intersection region between the inner circle of ring $T_i$ and the outer circle of ring $T_j$, as illustrated by the shadowed region in Figure \ref{Figure 2}.
Specially, we have $\dfrac{\partial A_{\theta}}{\partial r_i}>0$, $\dfrac{\partial A_{\theta}}{\partial r_j}>0$.
\end{lemma}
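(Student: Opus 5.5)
The plan is to write $A_\theta$ explicitly as sectors minus a triangle, then differentiate using Corollary \ref{area} together with the angle formulas from the proof of Lemma \ref{1-form}.

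\textbf{Step 1: decompose $A_\theta$.} The intersection of the inner circle of $T_i$ (radius $r_i$, center $O_i$) with the outer circle of $T_j$ (radius $R_j$, center $O_j$) is a lens that is symmetric about the geodesic $O_iO_j$. Its half on the side of $v_1$ is the union of two spherical sectors:
\begin{itemize}
\item the sector of radius $r_i$ at $O_i$ with angle $\alpha_{i,-}$;
\item the sector of radius $R_j$ at $O_j$ with angle $\alpha_{j,+}$.
\end{itemize}
These sectors overlap exactly in the triangle $\triangle O_iv_1O_j$. A spherical sector of radius $\rho$ and angle $\alpha$ has area $\alpha(1-\cos\rho)$. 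Hence
\begin{equation*}
A_\theta=\alpha_{i,-}(1-\cos r_i)+\alpha_{j,+}(1-\cos R_j)-A,
\end{equation*}
where $A$ is the area of $\triangle O_iv_1O_j$. Everything is a function of $(r_i,R_j)$, and $R_j$ depends on $r_j$ through $\cos R_j=q\cos r_j$.

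\textbf{Step 2: part (1).} Differentiate in $r_i$ with $R_j$ fixed, and substitute $\partial A/\partial r_i=\frac{\partial\alpha_{j,+}}{\partial r_i}(1-\cos l_{ij})$ from Corollary \ref{area}. This gives
\begin{equation*}
\frac{\partial A_\theta}{\partial r_i}=\alpha_{i,-}\sin r_i+\frac{\partial\alpha_{i,-}}{\partial r_i}(1-\cos r_i)+\frac{\partial\alpha_{j,+}}{\partial r_i}(\cos l_{ij}-\cos R_j).
\end{equation*}
The inputs for the remaining terms are:
\begin{itemize}
\item $\cos l_{ij}=\cos r_i\cos R_j$, from (\ref{cos law2});
\item $\partial\alpha_{j,+}/\partial r_i=\sin R_j/\sin^2 l_{ij}$, from (\ref{equ2});
\item $\partial\alpha_{i,-}/\partial r_i=-\sin^2\alpha_{i,-}\cot R_j\cos r_i$, obtained by differentiating (\ref{alphai-}).
\end{itemize}
To simplify, I will also use $\sin\alpha_{i,-}=\sin R_j/\sin l_{ij}$ from (\ref{sin law}) and $\tan\alpha_{i,-}=\tan R_j/\sin r_i$. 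The goal is to show that the last two terms add up to $-\sin r_i\sin\alpha_{i,-}\cos\alpha_{i,-}$.

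Expressing everything through $\sin\alpha_{i,-}$ and $\cos\alpha_{i,-}$, the sum of the last two terms is
\begin{equation*}
-\sin^2\alpha_{i,-}\cot R_j\bigl[\cos r_i(1-\cos r_i)+\cos R_j(1-\cos r_i)\tfrac{1}{\sin R_j}\cdot\tfrac{\sin R_j}{\cos R_j}\cdots\bigr].
\end{equation*}
After inserting $\sin^2 l_{ij}=1-\cos^2r_i\cos^2R_j$, this should collapse via the identity $\cot R_j\sin\alpha_{i,-}=\cos\alpha_{i,-}/\sin r_i$ (equivalent to (\ref{alphai-})). Verifying this trigonometric simplification is the main obstacle. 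I would handle it by writing every quantity in terms of $c=\cos r_i$ and $C=\cos R_j$ and checking the identity as a rational identity.

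\textbf{Step 3: part (2).} Differentiate with respect to $R_j$ with $r_i$ fixed, using $\partial A/\partial R_j=\frac{\partial\alpha_{i,-}}{\partial R_j}(1-\cos l_{ij})$ from Corollary \ref{area}. By the symmetric computation, with the roles of $(r_i,\alpha_{i,-})$ and $(R_j,\alpha_{j,+})$ exchanged and using (\ref{alphaj+}), I expect
\begin{equation*}
\frac{\partial A_\theta}{\partial R_j}=\tfrac12\sin R_j\,(2\alpha_{j,+}-\sin2\alpha_{j,+}).
\end{equation*}
Then apply the chain rule with $dR_j/dr_j=q\sin r_j/\sin R_j$. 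The factor $\sin R_j$ cancels, leaving $\frac q2\sin r_j(2\alpha_{j,+}-\sin2\alpha_{j,+})$.

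\textbf{Step 4: positivity.} The angles satisfy $\alpha_{i,-},\alpha_{j,+}\in(0,\pi/2)$, since they are acute angles of a right triangle with legs in $(0,\pi/2)$. For $x>0$ we have $2x>\sin 2x$. Both derivatives are therefore strictly positive, because $\sin r_i,\sin r_j,q>0$.
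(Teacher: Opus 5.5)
Your argument follows the paper's proof closely. It uses the same decomposition $A_\theta=A_1+A_2-A$, Corollary \ref{area} for $\partial A/\partial r_i$, and the chain rule through $\mathrm{d}R_j/\mathrm{d}r_j=q\sin r_j/\sin R_j$ for part (2). The one real difference is how you get $\partial\alpha_{i,-}/\partial r_i$: you differentiate (\ref{alphai-}) directly. The paper instead combines the angle-excess formula $A=\alpha_{i,-}+\alpha_{j,+}-\frac{\pi}{2}$ with Corollary \ref{area} to get $\partial\alpha_{i,-}/\partial r_i=-\cos l_{ij}\,\partial\alpha_{j,+}/\partial r_i$, and then closes with (\ref{cos law1}). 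Both give the same value. In fact, once you differentiate (\ref{alphai-}) and (\ref{alphaj+}) directly, the angle-excess formula already gives $\partial A/\partial r_i$, so Corollary \ref{area} is not needed on your route. Two points should be fixed.

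First, the geometric justification in Step 1 is backwards. The sector at $O_i$ contains $O_i$. But $O_i$ is not in the lens, since $\cos l_{ij}=\cos r_i\cos R_j<\cos R_j$ gives $l_{ij}>R_j$. So the half-lens is not the union of the two sectors. The reverse is true. By orthogonality, the leg $v_1O_j$ is tangent to the inner circle of $T_i$ at $v_1$, and the leg $v_1O_i$ is tangent to the outer circle of $T_j$ at $v_1$. Hence each sector lies inside $\triangle O_iv_1O_j$; their union is the triangle and their intersection is the half-lens. Inclusion--exclusion gives the same identity $A_\theta=A_1+A_2-A$, so nothing downstream changes, but the reason must be stated this way.

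Second, the simplification you flag as the main obstacle takes one line. It needs neither $\sin^2 l_{ij}=1-\cos^2r_i\cos^2R_j$ nor a rational-identity check. Put $X=\sin^2\alpha_{i,-}\cot R_j$.
Your derivative gives $\frac{\partial\alpha_{i,-}}{\partial r_i}(1-\cos r_i)=-X\cos r_i(1-\cos r_i)$.
By (\ref{equ2}) and (\ref{sin law}), $\partial\alpha_{j,+}/\partial r_i=\sin R_j/\sin^2l_{ij}=\sin^2\alpha_{i,-}/\sin R_j$. Together with $\cos l_{ij}-\cos R_j=-\cos R_j(1-\cos r_i)$, the last term is $-X(1-\cos r_i)$.
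The sum is $-X\sin^2r_i$. Using (\ref{alphai-}), this equals $-\sin^2\alpha_{i,-}\cot\alpha_{i,-}\sin r_i=-\sin r_i\sin\alpha_{i,-}\cos\alpha_{i,-}$, which is exactly what you needed. With this replacing the unfinished bracket expression, Steps 3 and 4 go through as you wrote them.
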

\begin{proof}
Note that $A=\alpha_{i,-}+\alpha_{j,+}+\frac{\pi}{2}-\pi$. Combining this identity with Corollary \ref{area}, we obtain
\begin{equation*}
	\frac{\partial\alpha_{j,+}}{\partial r_i}(1-\cos l_{ij})=\frac{\partial\alpha_{i,-}}{\partial r_i}+\frac{\partial\alpha_{j,+}}{\partial r_i}.
\end{equation*}
Therefore,
\begin{equation*}
	\frac{\partial\alpha_{i,-}}{\partial r_i}=-\cos l_{ij}\frac{\partial\alpha_{j,+}}{\partial r_i}.
\end{equation*}
Similarly, we have $\frac{\partial\alpha_{j,+}}{\partial R_j}=-\cos l_{ij}\frac{\partial\alpha_{i,-}}{\partial R_j}$, which implies $\frac{\partial\alpha_{j,+}}{\partial r_j}=-\cos l_{ij}\frac{\partial\alpha_{i,-}}{\partial r_j}$.

Let $A_1$ be the area of the circular sector where the left circle in Figure  \ref{Figure 2} intersects the triangle $\triangle O_iv_1O_j$, and let $A_2$ be the area of the corresponding sector formed by the right circle. Then we have
\[ A_1=\alpha_{i,-}(1-\cos r_i), \quad A_2=\alpha_{j,+}(1-\cos R_j), \quad A_{\theta}=A_1+A_2-A. \]

Taking the partial derivative of $A_{\theta}$ with respect to $r_i$ gives
\begin{align}
	\frac{\partial A_{\theta}}{\partial r_i}&=\frac{\partial A_1}{\partial r_i}+\frac{\partial A_2}{\partial r_i}-\frac{\partial A}{\partial r_i}\nonumber\\
	&=\frac{\partial \alpha_{i,-}}{\partial r_i}(1-\cos r_i)+\alpha_{i,-}\sin r_i+\frac{\partial \alpha_{j,+}}{\partial r_i}(1-\cos R_j)-\frac{\partial\alpha_{j,+}}{\partial r_i}(1-\cos l_{ij})\nonumber\\
	&=-\cos l_{ij}\frac{\partial\alpha_{j,+}}{\partial r_i}(1-\cos r_i)+\alpha_{i,-}\sin r_i+\frac{\partial \alpha_{j,+}}{\partial r_i}(1-\cos R_j)-\frac{\partial\alpha_{j,+}}{\partial r_i}(1-\cos l_{ij})\nonumber\\
	&=\frac{\partial\alpha_{j,+}}{\partial r_i}(\cos l_{ij}\cos r_i-\cos R_j)+\alpha_{i,-}\sin r_i\nonumber\\
	&=-\sin l_{ij}\sin r_i\cos \alpha_{i,-}\frac{\partial\alpha_{j,+}}{\partial r_i}+\alpha_{i,-}\sin r_i.\label{pd r_i}
\end{align}
Similarly, we have
\[ \frac{\partial A_{\theta}}{\partial R_j}=-\sin l_{ij}\sin R_j\cos \alpha_{j,+}\frac{\partial\alpha_{i,-}}{\partial R_j}+\alpha_{j,+}\sin R_j, \]
which implies
\begin{equation}\label{pd r_j}
	\frac{\partial A_{\theta}}{\partial r_j}=-\sin l_{ij}\sin R_j\cos \alpha_{j,+}\frac{\partial\alpha_{i,-}}{\partial r_j}+q\alpha_{j,+}\sin r_j.
\end{equation}
Substituting equations (\ref{equ1}) and (\ref{equ2}) into equations (\ref{pd r_i}) and (\ref{pd r_j}) respectively yields

\begin{align*}
	\frac{\partial A_{\theta}}{\partial r_i}=-\sin\alpha_{i,-}\sin r_i\cos\alpha_{i,-}+\alpha_{i,-}\sin r_i
	=\frac{1}{2}\sin r_i(2\alpha_{i,-}-\sin 2\alpha_{i,-}),
\end{align*}

\begin{align*}
	\frac{\partial A_{\theta}}{\partial r_j}
	=-\sin l_{ij}\sin R_j\cos \alpha_{j,+}\cdot \frac{q\sin r_j\sin\alpha_{j,+}}{\sin R_j\sin l_{ij}}+q\alpha_{j,+}\sin r_j
	=\frac{q}{2}\sin r_j(2\alpha_{j,+}-\sin 2\alpha_{j,+}).
\end{align*}

\end{proof}

\begin{lemma}\label{negative definite}
For the spherical quadrilateral $O_iv_1O_jv_2$, the matrix $\frac{\partial(L_{(e,i)},L_{(e,j)})}{\partial (u_i,u_j)}$is negative definite.
\end{lemma}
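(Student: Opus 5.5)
The plan is to show that the symmetric $2\times 2$ matrix $M=\frac{\partial(L_{(e,i)},L_{(e,j)})}{\partial (u_i,u_j)}$ is strictly diagonally dominant with negative diagonal. Lemma \ref{1-form} already gives that $M$ is symmetric with positive off-diagonal entries $\frac{\partial L_{(e,i)}}{\partial u_j}=\frac{\partial L_{(e,j)}}{\partial u_i}>0$. It therefore suffices to prove that each column sum is negative:
\[
\frac{\partial L_{(e,i)}}{\partial u_i}+\frac{\partial L_{(e,j)}}{\partial u_i}<0,\qquad \frac{\partial L_{(e,i)}}{\partial u_j}+\frac{\partial L_{(e,j)}}{\partial u_j}<0 .
\]
These inequalities force $\frac{\partial L_{(e,i)}}{\partial u_i}<-\frac{\partial L_{(e,j)}}{\partial u_i}<0$, and similarly for $j$. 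A symmetric matrix with negative diagonal and strict diagonal dominance is negative definite, by Gershgorin or directly from the trace and determinant. Since $\frac{\partial}{\partial u_i}=\sin 2R_i\,\frac{\partial}{\partial r_i}$ and $\sin 2R_i>0$, we may work with $r_i$-derivatives instead.

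The key step is to identify the column sum with minus the derivative of the lens areas. We split $L_{(e,i)}+L_{(e,j)}$ into its two triangle contributions: $\alpha_{i,-}\cos r_i+\alpha_{j,+}\cos R_j$ from $\triangle O_iv_1O_j$, and $\alpha_{i,+}\cos R_i+\alpha_{j,-}\cos r_j$ from $\triangle O_iv_2O_j$. We then differentiate the first contribution in $r_i$. This produces $-\alpha_{i,-}\sin r_i$ together with $\frac{\partial\alpha_{i,-}}{\partial r_i}\cos r_i+\frac{\partial\alpha_{j,+}}{\partial r_i}\cos R_j$.

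In the proof of Lemma \ref{pd} we have $\frac{\partial\alpha_{i,-}}{\partial r_i}=-\cos l_{ij}\frac{\partial\alpha_{j,+}}{\partial r_i}$. Using it, the bracket becomes $\frac{\partial\alpha_{j,+}}{\partial r_i}(\cos R_j-\cos l_{ij}\cos r_i)$. By the spherical law of cosines (\ref{cos law1}) this equals $\frac{\partial\alpha_{j,+}}{\partial r_i}\sin l_{ij}\sin r_i\cos\alpha_{i,-}$. By (\ref{equ2}) it then equals $\sin r_i\sin\alpha_{i,-}\cos\alpha_{i,-}$. Hence the first triangle contributes exactly
\[
-\tfrac12\sin r_i\,(2\alpha_{i,-}-\sin 2\alpha_{i,-})=-\frac{\partial A_\theta}{\partial r_i},
\]
which is negative by Lemma \ref{pd}.

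The same computation applies to $\triangle O_iv_2O_j$, with the roles of inner and outer radii swapped. There $r_i$ enters through $R_i$ via $\frac{dR_i}{dr_i}=q\frac{\sin r_i}{\sin R_i}$, and the analogue of Lemma \ref{pd}(2) (with $i,j$ interchanged) gives a contribution $-\frac{q}{2}\sin r_i(2\alpha_{i,+}-\sin2\alpha_{i,+})<0$. The $r_j$-column is handled symmetrically. In the first triangle, $r_j$ enters through $R_j$, and the derivative is $-\frac{\partial A_\theta}{\partial r_j}$ from Lemma \ref{pd}(2). In the second triangle, $r_j$ is an inner radius. Both contributions are negative because $2x-\sin 2x>0$ for $x>0$.

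The main obstacle I expect is bookkeeping in the second triangle. One must verify that the relations $\frac{\partial\alpha}{\partial(\cdot)}=-\cos l_{ij}\frac{\partial\alpha'}{\partial(\cdot)}$ and the cosine-law substitution carry over with the correct factors of $q$ from the chain rule $dR/dr$. If the factors do not line up as cleanly as above, I would instead compute the two column sums directly from the explicit formulas $\cot\alpha_{i,+}=\cot r_j\sin R_i$ and $\cot\alpha_{j,-}=\cot R_i\sin r_j$, and check their signs.
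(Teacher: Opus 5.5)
Your proposal is correct and follows essentially the paper's argument: symmetry and positive off-diagonal entries from Lemma \ref{1-form}, negative column sums $\partial(L_{(e,i)}+L_{(e,j)})/\partial u_k$, and hence strict diagonal dominance with negative diagonal. The only difference is in how the column sum is linked to the lens areas. The paper gets $\partial(L_{(e,i)}+L_{(e,j)})/\partial r_k=-\partial(A_\theta+B_\theta)/\partial r_k$ from the Gauss--Bonnet identity $A_\theta+\alpha_{i,-}\cos r_i+\alpha_{j,+}\cos R_j=\frac{\pi}{2}$ and its analogue for $B_\theta$, whereas you verify it by directly differentiating the angle terms, which repeats the computation of Lemma \ref{pd} with the opposite sign; the factors of $q$ in the second triangle do line up as you expected.
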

\begin{proof}
Let $B_{\theta}$ be half of the area of the intersection region between the outer circle of ring $T_i$ and the inner circle of ring $T_j$.
Applying the result of Lemma \ref{pd} to the triangle $\triangle O_iv_2O_j$ yields the following equations

\begin{equation*}
	\frac{\partial B_{\theta}}{\partial r_i}
	=\frac{q}{2}\sin r_i(2\alpha_{i,+}-\sin 2\alpha_{i,+})>0,
\end{equation*}
\begin{equation*}
	\frac{\partial B_{\theta}}{\partial r_j}=\frac{1}{2}\sin r_j(2\alpha_{j,-}-\sin 2\alpha_{j,-})>0.
\end{equation*}

From the above equations and the proof of Lemma \ref{pd}, it follows that $\frac{\partial A_{\theta}}{\partial r_i}$, $\frac{\partial A_{\theta}}{\partial r_j}$, $\frac{\partial B_{\theta}}{\partial r_i}$, and $\frac{\partial B_{\theta}}{\partial r_j}$ are all positive.
Employing the Gauss-Bonnet theorem for the shadowed region of the triangle $\triangle O_iv_1O_j$ and for the corresponding region of the triangle $\triangle O_iv_2O_j$ leads to
\[
\begin{cases}
	A_{\theta}+\alpha_{i,-}\cos r_i+\alpha_{j,+}\cos R_j=\frac{\pi}{2},\\
	B_{\theta}+\alpha_{i,+}\cos R_i+\alpha_{j,-}\cos r_j=\frac{\pi}{2}.
\end{cases}
\]
Adding the two equations above yields
$A_{\theta}+B_{\theta}=\pi-L_{(e,i)}-L_{(e,j)}$.
Therefore,
\[
\frac{\partial (L_{(e,i)}+L_{(e,j)})}{\partial r_i}=-\frac{\partial (A_{\theta}+B_{\theta})}{\partial r_i}<0,
\]
\[
\frac{\partial (L_{(e,i)}+L_{(e,j)})}{\partial r_j}=-\frac{\partial (A_{\theta}+B_{\theta})}{\partial r_j}<0
\]
by Lemma \ref{pd}.

Combining this with Lemma \ref{1-form}, we have $\frac{\partial L_{(e,i)}}{\partial u_j}=\frac{\partial L_{(e,j)}}{\partial u_i}>0$, $ \frac{\partial (L_{(e,i)}+L_{(e,j)})}{\partial u_i}<0$ and $\frac{\partial (L_{(e,i)}+L_{(e,j)})}{\partial u_j}<0 $.
Therefore, the matrix $\frac{\partial(L_{(e,i)},L_{(e,j)})}{\partial (u_i,u_j)}$ is symmetric and strictly diagonally dominant with negative diagonal entries, hence negative definite.
\end{proof}

\section{Rigidity of spherical orthogonal ring patterns}\label{Rigidity of spherical orthogonal ring patterns}

\subsection{Construction of the convex function.}

In this section, we construct a convex function and prove its convexity, which plays a crucial role in the subsequent proof of the rigidity theorem for spherical orthogonal ring patterns on closed surfaces.

For each $e\in E$ and $e=f_i\cap f_j$, we can define a potential function
\[ \mathcal{E}_{e}(u_{i},u_{j}) =\int^{(u_{i},u_{j})}(-L_{(e,i)})\mathrm{d}u_{i}+(-L_{(e,j)})\mathrm{d}u_{j}  .\]
Let $U_e$ be the admissible space of the spherical quadrilateral corresponding to $e$, and we have $ U_{e}\subset \mathbb{R}^{2}$.
From Lemma $\ref{1-form}$ and Lemma \ref{negative definite} we know that $\mathcal{E}_{e}(u_{i},u_{j})$ is well defined on $U_{e}$.

\begin{lemma}
	$\mathcal{E}_{e}(u_{i},u_{j})$ is strictly convex on $U_{e}$.
\end{lemma}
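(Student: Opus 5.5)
The plan is to identify the Hessian of $\mathcal{E}_e$ explicitly and then invoke the definiteness result of Lemma \ref{negative definite}.

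\emph{Well-definedness and gradient.} By Lemma \ref{1-form}, the 1-form $-L_{(e,i)}\mathrm{d}u_i-L_{(e,j)}\mathrm{d}u_j$ is closed. The admissible space $U_e$ is the image of $(0,\frac{\pi}{2})^2$ under the strictly increasing coordinate change (\ref{u_i}), so it is an open rectangle, hence convex and simply connected. Therefore the line integral defining $\mathcal{E}_e$ is path independent, and $\mathcal{E}_e$ is a smooth function on $U_e$ with
\[
\frac{\partial \mathcal{E}_e}{\partial u_i}=-L_{(e,i)},\qquad \frac{\partial \mathcal{E}_e}{\partial u_j}=-L_{(e,j)}.
\]
Smoothness holds because the angles $\alpha_{i,\pm},\alpha_{j,\pm}$ are smooth in $(r_i,r_j)$ by the explicit cotangent formulas (such as (\ref{alphai-})), and $r_i$ depends smoothly on $u_i$ since $\frac{\mathrm{d}u_i}{\mathrm{d}r_i}=\frac{1}{\sin 2R_i}>0$.

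\emph{Hessian.} Differentiating once more gives
\[
\mathrm{Hess}\,\mathcal{E}_e=-\frac{\partial(L_{(e,i)},L_{(e,j)})}{\partial(u_i,u_j)}.
\]
This matrix is symmetric by Lemma \ref{1-form}. By Lemma \ref{negative definite}, the Jacobian $\frac{\partial(L_{(e,i)},L_{(e,j)})}{\partial(u_i,u_j)}$ is negative definite at every point of $U_e$, so the Hessian of $\mathcal{E}_e$ is positive definite everywhere on $U_e$.

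\emph{Conclusion.} A smooth function on a convex open set whose Hessian is positive definite at every point is strictly convex. To see this, restrict it to any segment $u(t)=u^0+t(u^1-u^0)$ with $u^1\ne u^0$. Then $\frac{\mathrm{d}^2}{\mathrm{d}t^2}\mathcal{E}_e(u(t))=(u^1-u^0)^T\,\mathrm{Hess}\,\mathcal{E}_e\,(u^1-u^0)>0$, so the restriction is strictly convex in $t$. This gives strict convexity of $\mathcal{E}_e$ on $U_e$.

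The main point needing care is the convexity of the domain $U_e$. It is needed both for path independence of the integral and for the segment argument. It follows from the monotonicity of $u_i$ in $r_i$ noted in Section \ref{Configurations of orthogonal rings}, since that monotonicity makes $U_e$ a product of two intervals. Everything else reduces directly to Lemmas \ref{1-form} and \ref{negative definite}.
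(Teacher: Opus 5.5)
Your proposal is correct and follows essentially the same route as the paper. The paper also writes the Hessian of $\mathcal{E}_e$ as $-\frac{\partial(L_{(e,i)},L_{(e,j)})}{\partial(u_i,u_j)}$ and gets positive definiteness, hence strict convexity, directly from Lemma \ref{negative definite}; your added justifications (path independence on the convex rectangle $U_e$, smoothness, and the segment argument) only make explicit details the paper leaves implicit.
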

\begin{proof}	
The Hessian matrix of $\mathcal{E}_{e}(u_{i},u_{j})$ is
\[ H=\begin{pmatrix}
	-\frac{\partial L_{(e,i)}}{\partial u_{i}}	&-\frac{\partial L_{(e,i)}}{\partial u_{j}} \\
	-\frac{\partial L_{(e,j)}}{\partial u_{i}}	&-\frac{\partial L_{(e,j)}}{\partial u_{j}}
\end{pmatrix} .\]
By Lemma \ref{negative definite}, $H$ is positive definite and then $\mathcal{E}_{e}(u_{i},u_{j})$ is strictly convex on $U_{e}$.
\end{proof}

Based on the discussion in Section \ref{Configurations of orthogonal rings}, we know that the admissible space $U$ is a convex set in $\mathbb{R}^{|F|}$. Hence, we have the following corollary.

\begin{corollary}\label{convex function 1}
The function $\mathcal{E}(u)=\sum_{e:e=f_i\cap f_j\in E}\mathcal{E}_{e}(u_{i}, u_{j})$ is strictly convex on $U$, where $ u=(u_{1},..., u_{|F|}).$
\end{corollary}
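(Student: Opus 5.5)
The plan is to show that the Hessian of $\mathcal{E}$ at every $u\in U$ is positive definite and that $U$ is convex. Strict convexity of $\mathcal{E}$ on $U$ then follows from the standard fact that a $C^2$ function with positive definite Hessian on a convex open set is strictly convex.

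\textbf{Convexity of the domain and smoothness.} The substitution $u_i(r_i)$ in (\ref{u_i}) is a strictly increasing smooth function of $r_i\in(0,\frac{\pi}{2})$, since $\frac{\mathrm{d}u_i}{\mathrm{d}r_i}=\frac{1}{\sin 2R_i}>0$. Its image is therefore an open interval $I$, and $U=I^{|F|}$, a product of open intervals. This set is convex, and its projection to any pair of coordinates $(u_i,u_j)$ lies in $U_e$. Each $\mathcal{E}_e$ is well defined and smooth on $U_e$ by Lemma \ref{1-form}, because $U_e$ is simply connected and the 1-form is closed. Hence $\mathcal{E}=\sum_e\mathcal{E}_e$ is a smooth function on $U$. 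If two faces $f_i,f_j$ share several edges, each such edge contributes its own term, and if an edge has the same face on both sides, the term is $\mathcal{E}_e(u_i,u_i)$. The argument below handles these cases uniformly.

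\textbf{Hessian as a sum of edge contributions.} For $x\in\mathbb{R}^{|F|}$ we compute
\[
x^{T}\,\mathrm{Hess}\,\mathcal{E}\;x=\sum_{e=f_i\cap f_j}(x_i,x_j)\,H_e\,(x_i,x_j)^{T},
\]
where $H_e=-\frac{\partial(L_{(e,i)},L_{(e,j)})}{\partial(u_i,u_j)}$. By Lemma \ref{negative definite}, each $H_e$ is positive definite, so every summand is nonnegative. A summand vanishes only when $x_i=x_j=0$. For an edge whose two sides lie in the same face, with $x_i=x_j=t$, the summand is $t^2\,\mathbf{1}^{T}H_e\mathbf{1}>0$ unless $t=0$. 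Now suppose the whole sum is zero. Every face of the cellular decomposition has at least one edge, so $x_i=0$ for every face $f_i$, that is, $x=0$. Therefore $\mathrm{Hess}\,\mathcal{E}$ is positive definite on $U$.

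\textbf{Conclusion and main obstacle.} Restrict $\mathcal{E}$ to a segment in the convex set $U$. The resulting one-variable function has strictly positive second derivative, which gives strict convexity. Essentially nothing here is difficult beyond Lemma \ref{negative definite}. The only point needing care is the positivity argument in the degenerate cases: an edge with both sides in the same face, and multiple edges between the same pair of faces. I would handle these explicitly as described above, rather than assuming $i\neq j$.
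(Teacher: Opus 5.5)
Your proof is correct, but it shows the Hessian is positive definite by a different mechanism from the paper's.

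The paper assembles the Hessian $J$ of $\mathcal{E}$ entrywise. It then proves $J$ is symmetric and strictly diagonally dominant with positive diagonal, via the row estimate $J_{ii}-\sum_{j\neq i}|J_{ij}|=-\sum_{e<f_i}(\frac{\partial L_{(e,i)}}{\partial u_i}+\frac{\partial L_{(e,i)}}{\partial u_j})>0$, where $j$ denotes the other face of $e$. This needs more than the conclusion of Lemma \ref{negative definite}. It uses the sign information obtained inside its proof: $\frac{\partial L_{(e,i)}}{\partial u_j}=\frac{\partial L_{(e,j)}}{\partial u_i}>0$ and $\frac{\partial(L_{(e,i)}+L_{(e,j)})}{\partial u_i}<0$.

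You instead split the quadratic form into the edge forms $(x_i,x_j)H_e(x_i,x_j)^T$. You use only that each $H_e$ is positive definite, plus the fact that every face has an edge. This is the standard argument for a sum of semidefinite forms whose supports cover all coordinates.

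What your route buys:
\begin{itemize}
\item It asks less of the local lemma: only its statement, not the structure of its proof.
\item It handles several edges between the same pair of faces, and edges with the same face on both sides, with no extra work. The paper's formula for $J_{ij}$ tacitly assumes a single shared edge; it should be a sum over shared edges, which is harmless since all contributions are negative. The paper does not address the self-adjacent case at all.
\end{itemize}

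What the paper's route buys: more structure. It shows $J=-\Delta$ has positive diagonal, nonpositive off-diagonal entries, and strict diagonal dominance. That is, $J$ is a discrete-Laplacian-type M-matrix, which is the form one typically exploits when working with the curvature flows.
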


\begin{proof}
By simplifying $\mathcal{E}(u)$, we obtain
\begin{align*}
	\mathcal{E}(u)&=\sum_{e:e=f_i\cap f_j\in E}\mathcal{E}_{e}(u_{i}, u_{j})\\
	&=\sum_{e:e=f_i\cap f_j\in E}\int^{(u_{i},u_{j})}(-L_{(e,i)})\mathrm{d}u_{i}+(-L_{(e,j)})\mathrm{d}u_{j}\\
	&=\int^{u}\sum_{f_i\in F}(\sum_{e:e< f_i}(-L_{(e,i)}))du_{i}\\
	&=\int^{u}\sum_{f_i\in F}(-L_i)du_{i}.\\
\end{align*}

We compute the first-order and second-order partial derivatives of the function $\mathcal{E}(u)$, giving
\begin{align*}
    &\frac{\partial \mathcal{E}(u)}{\partial u_{i}} =-L_{i}, \quad \forall f_i\in F ;\\
	&\frac{\partial^2\mathcal{E}(u)}{\partial u^2_i} = -\frac{\partial L_{i}}{\partial u_{i}}=-\sum_{e: e<f_i} \frac{\partial L_{(e,i)}}{\partial u_{i}}>0, \quad \forall f_i\in F ;\\
	&\frac{\partial^2\mathcal{E}(u)}{\partial u_i\partial u_j} = -\frac{\partial L_{i}}{\partial u_{j}}= -\frac{\partial (\sum_{e:e<f_i }L_{(e,i)})}{\partial u_{j}}=
	\begin{cases}
		-\frac{\partial L_{(e,i)}}{\partial u_{j}}<0	, &\text{if there exists} \ e=f_i\cap f_j\in E,\\
		0,  &\text{otherwise}.
	\end{cases}
\end{align*}

Suppose $J$ is the Hessian matrix of $\mathcal{E}(u)$. Then we have
\begin{align*}
	& J_{ii}= -\sum_{e: e<f_i} \frac{\partial L_{(e,i)}}{\partial u_{i}}>0, \quad \forall f_i\in F ;\\
	&J_{ij}= J_{ji}=-\frac{\partial L_{(e,i)}}{\partial u_{j}}<0	, \quad \text{if there exists}\ e=f_i\cap f_j\in E;\\
	&J_{ij}= J_{ji}=0, \quad \text{otherwise} .
\end{align*}

Since
\begin{align*}
	J_{ii}-\sum_{j\ne i}|J_{ij}|
	&=-\sum_{e: e< f_i} \frac{\partial L_{(e,i)}}{\partial u_{i}}-\sum_{e: e< f_i}\frac{\partial L_{(e,i)}}{\partial u_{j}}\\
	&=-\sum_{e: e<f_i}(\frac{\partial L_{(e,i)}}{\partial u_{i}}+\frac{\partial L_{(e,i)}}{\partial u_{j}})\\
	&>0, \quad \forall f_i\in F ,
\end{align*}
then $J$ is a symmetric and strictly diagonally dominant matrix with positive diagonal entries. This implies that $J$ is positive definite and $\mathcal{E}(u)$ is strictly convex on $U$.
\end{proof}

\subsection{The proof of Theorem \ref{Thm of rigidity}.}

\begin{proof}
By Corollary $\ref{convex function 1}$,  the function $\mathcal{E}(u)=\sum_{e:e=f_i\cap f_j\in E}\mathcal{E}_{e}(u_{i}, u_{j})$ is strictly convex on the convex admissible space $U$ with the gradient $\bigtriangledown\mathcal{E}(u)=(-L_1,...,-L_{|F|})$, where $ u=(u_1,..., u_{|F|}).$ It is a classical result in analysis that the gradient map of a strictly convex $C^{2}$-function on a convex domain in $\mathbb{R}^{|F|}_{>0}$ is injective. Therefore, a spherical orthogonal ring pattern on the closed surface $S$ is uniquely determined by its modified combinatorial total geodesic curvature.
\end{proof}

\begin{remark}
Let $\Omega=\{(r_1,...,r_{|F|})\mid r_i\in(0,\frac{\pi}{2}),\forall \ i=1,\dots,|F|\}=(0,\frac{\pi}{2})^{|F|}$ and $P:\Omega\to \mathbb{R}^{|F|}_{>0}$, $(r_1,...,r_{|F|})\mapsto (L_1,...,L_{|F|})$. It is difficult to characterize the image of the function $P$ because we cannot obtain a satisfactory boundary for it. As a result, the existence for the spherical orthogonal ring pattern is not known.
\end{remark}

\section{Combinatorial curvature flows of spherical orthogonal ring patterns}\label{Combinatorial curvature flows of spherical orthogonal ring patterns}
\subsection{Combinatorial curvature flows}
The combinatorial Ricci flow was first introduced by Chow and Luo in \cite{C-L} for Thurston's Euclidean and hyperbolic circle patterns on closed surfaces.
The combinatorial Calabi flow originates from Ge's work \cite{G}, where he first formulated it for Thurston's Euclidean circle patterns on closed surfaces.
In this section, we introduce the combinatorial Ricci flow and combinatorial Calabi flow for spherical orthogonal ring patterns on closed surfaces and prove the local convergence of the solutions for these combinatorial curvature flows.

\begin{definition}
Given a function $\hat{L}=(\hat{L}_1,...,\hat{L}_{|F|})\in \mathbb{R}^{|F|}_{>0}$,
the combinatorial Ricci flow for the spherical orthogonal ring patterns is defined to be
\begin{equation}\label{Eq: CRF}
	\frac{\mathrm{d}r_i}{\mathrm{d}t}
	=(L_i-\hat{L}_i)\sin 2R_i, \quad \forall i\in \{1,...,|F|\},
\end{equation}
and the combinatorial Calabi flow for the spherical orthogonal ring patterns is defined to be
\begin{equation}\label{Eq: CCF}
	\frac{\mathrm{d}r_{i}}{\mathrm{d}t}
	=-(\Delta(L-\hat{L}))_i\sin 2R_i, \quad \forall i\in \{1,...,|F|\},
\end{equation}
where $\Delta=(\frac{\partial L_i}{\partial u_j})_{|F|\times |F|}$ is the discrete Laplace operator.
\end{definition}

Through the variable substitution (\ref{u_i}), the combinatorial Ricci flow (\ref{Eq: CRF}) can be rewritten as
\begin{equation}\label{CRF;u}
	\frac{\mathrm{d}u_i}{\mathrm{d}t}
	=L_i-\hat{L}_i, \quad \forall i\in \{1,...,|F|\},
\end{equation}
and the combinatorial Calabi flow (\ref{Eq: CCF}) takes the equivalent form of
\begin{equation}\label{CCF;u}
	\frac{\mathrm{d}u_{i}}{\mathrm{d}t}
	=-(\Delta(L-\hat{L}))_i, \quad \forall i\in \{1,...,|F|\}.
\end{equation}

Set
\begin{equation*}
	\tilde{\mathcal{E}}(u)=\mathcal{E}(u)+\sum_{i=1}^{|F|}\hat{L}_iu_i
\end{equation*}
and
\begin{equation*}
	\mathcal{C}(u)=\frac{1}{2}\sum_{i=1}^{|F|}(L_i-\hat{L}_i)^2.
\end{equation*}
It is easy to verify that the flow (\ref{CRF;u}) is the negative gradient flow of $\tilde{\mathcal{E}}(u)$ and the flow (\ref{CCF;u}) is the negative gradient flow of $\mathcal{C}(u)$. Moreover, both $\tilde{\mathcal{E}}(u)$ and $\mathcal{C}(u)$ are decreasing along the flows (\ref{CRF;u}) and (\ref{CCF;u}).

\subsection{Local convergence of combinatorial curvature flows}
Since $L$ is a smooth function of $u$, it follows from the Picard-Lindel\"of theorem that the combinatorial curvature flows (\ref{CRF;u}) and (\ref{CCF;u}) admit a unique local solution for any initial value. We state the Lyapunov's theorem as presented by Pontryagin in \cite{Pontryagin} as follows.

\begin{lemma}[\cite{Pontryagin}, Chapter 5]\label{Ly thm}
Let $U$ be an open set in $\mathbb{R}^{|F|}$ and $f\in C^1(U,\mathbb{R}^{|F|})$. Consider an autonomous ordinary differential system
\begin{equation}\label{ods}
	\dot{x}(t)=f(x(t)), \ x(t)\in U.
\end{equation}
Assume $x_0$ is an equilibrium state of this system, i.e. $f(x_0)=0$. If all eigenvalues of the matrix $\frac{\partial f}{\partial x}(x_0)$ have negative real part, then $x_0$ is an asymptotically stable point. More precisely, there exists a neighborhood $\tilde{U}\subset U$ of $x_0$ such that, for any initial value $ x(0)\in \tilde{U}$, the solution of (\ref{ods}) exists for all time and converges exponentially fast to $x_0$.
\end{lemma}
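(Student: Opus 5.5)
The plan is to prove Lemma \ref{Ly thm} by the standard linearization argument. We shift coordinates so that $x_0=0$ and write $f(x)=Ax+g(x)$, where $A=\frac{\partial f}{\partial x}(x_0)$ and $g(x)=o(|x|)$ as $x\to 0$. The $o(|x|)$ bound holds because $f\in C^1$ and $f(x_0)=0$. The whole argument rests on a quadratic Lyapunov function adapted to $A$.

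\textbf{Step 1: the Lyapunov matrix.} Since every eigenvalue of $A$ has negative real part, there are constants $C\ge 1$ and $\mu>0$ with $\|e^{tA}\|\le Ce^{-2\mu t}$. Hence the integral
\[ P=\int_0^\infty e^{tA^T}e^{tA}\,dt \]
converges, and $P$ is symmetric positive definite. Differentiating $e^{tA^T}e^{tA}$ in $t$ and integrating gives
\[ A^TP+PA=-I. \]
Set $V(x)=x^TPx$. Then $\lambda_{\min}(P)|x|^2\le V(x)\le\lambda_{\max}(P)|x|^2$.

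\textbf{Step 2: decay of $V$ along solutions.} Along a solution,
\[ \dot V=x^T(A^TP+PA)x+2x^TPg(x)=-|x|^2+2x^TPg(x). \]
Choose $\delta>0$ so that $|g(x)|\le \frac{1}{4\|P\|}|x|$ whenever $|x|\le\delta$, with the closed ball $\bar B_\delta(x_0)$ contained in $U$. On this ball $\dot V\le -\frac12|x|^2\le -\kappa V$, where $\kappa=\frac{1}{2\lambda_{\max}(P)}$.

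\textbf{Step 3: trapping and global existence.} Take $\tilde U$ to be the sublevel set $\{V<c\}$ with $c=\lambda_{\min}(P)\delta^2$, which lies inside the ball of radius $\delta$. Local existence and uniqueness come from Picard--Lindel\"of, since $f\in C^1$. Because $V$ is nonincreasing while the solution stays in $\{V<c\}$, the solution can never reach the boundary $\{V=c\}$. So the solution stays in a compact subset of $U$, and the standard extension theorem gives existence for all $t\ge0$.

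\textbf{Step 4: exponential convergence.} Gr\"onwall's inequality gives $V(x(t))\le V(x(0))e^{-\kappa t}$. Therefore
\[ |x(t)-x_0|\le \sqrt{\lambda_{\max}(P)/\lambda_{\min}(P)}\;|x(0)-x_0|\;e^{-\kappa t/2}, \]
which is exponential convergence.

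The main obstacle is Step 1: constructing $P$ from the spectral hypothesis when $A$ need not be diagonalizable or symmetric. The bound $\|e^{tA}\|\le Ce^{-2\mu t}$ should follow from the Jordan form. Each entry of $e^{tA}$ is a polynomial in $t$ times $e^{\lambda t}$ with $\operatorname{Re}\lambda<0$, so it is dominated by $e^{-2\mu t}$ once $2\mu$ is smaller than $\min|\operatorname{Re}\lambda|$. The rest is routine.

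In the paper's application the matrix $A$ is symmetric: it is $-\frac{\partial^2\tilde{\mathcal{E}}}{\partial u^2}$ for the flow (\ref{CRF;u}), and $-\Delta^2$ for the flow (\ref{CCF;u}), where $\Delta=-J$ is symmetric and nonsingular by Corollary \ref{convex function 1}. In that case Step 1 is trivial, since one can take $P=I$.
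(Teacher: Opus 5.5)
Your proof is correct. The paper gives no argument for this lemma and simply cites Pontryagin, and what you wrote is the standard quadratic-Lyapunov-function proof, which is also the route of the classical treatment.

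The one difference is in how the quadratic form is built. Textbook versions usually take a complex Jordan-type basis rescaled so that the off-diagonal entries are small. Your integral $P=\int_0^\infty e^{tA^T}e^{tA}\,dt$ instead solves $A^TP+PA=-I$ exactly and without coordinates, at the cost of first proving $\|e^{tA}\|\le Ce^{-2\mu t}$.

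One small imprecision in your closing remark. For symmetric negative definite $A$, the choice $P=I$ gives only $A^TP+PA=2A\preceq -2\lambda I$, with $\lambda>0$ the smallest eigenvalue of $-A$, not $-I$. That inequality is all Step 2 actually uses. If you want the equation itself, take $P=-\tfrac12 A^{-1}$.
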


We have the following theorem on the long-time existence and convergence of combinatorial curvature flows.

\begin{theorem}
Given a function $\hat{L}\in \mathbb{R}^{|F|}_{>0}$, if the solution $u(t)$ of the combinatorial Ricci flow (\ref{CRF;u}) or the combinatorial Calabi flow (\ref{CCF;u}) exists for all time and converges to $\hat{u}\in U$, then $L(\hat{u})=\hat{L}$. Moreover, if $\hat{L}$ is attainable (i.e., $L(\hat{u})=\hat{L}$ for some $\hat{u}\in U$), then there exists a neighborhood $\tilde{U}\subset U$ of $\hat{u}$ such that for any initial value $u(0)\in \tilde{U}$, the solutions of the flows (\ref{CRF;u}) and (\ref{CCF;u}) exist for all time and converges exponentially to $\hat{u}$.
\end{theorem}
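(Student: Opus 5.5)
The plan has two parts. First I show that any limit point of a flow is an equilibrium. Then I apply Lyapunov's theorem (Lemma \ref{Ly thm}) at an attainable $\hat u$, using the negative definiteness established in Corollary \ref{convex function 1}.

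\textbf{Limit is an equilibrium.} Suppose $u(t)$ solves (\ref{CRF;u}) for all time and $u(t)\to\hat u\in U$. Since $L$ is continuous (indeed smooth) on $U$, we get $L(u(t))\to L(\hat u)$. Suppose $L_i(\hat u)-\hat L_i=c\neq 0$ for some $i$. Then for $t\ge T$ large we have $\frac{\mathrm{d}u_i}{\mathrm{d}t}$ of the same sign as $c$ with $|\frac{\mathrm{d}u_i}{\mathrm{d}t}|>|c|/2$. Integrating gives $|u_i(t)-u_i(T)|\ge |c|(t-T)/2\to\infty$, which contradicts convergence. Hence $L(\hat u)=\hat L$.

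For the Calabi flow (\ref{CCF;u}), the same argument shows $(\Delta(L-\hat L))(\hat u)=0$, by continuity of $\Delta$, which is built from the derivatives of $L$. Now $\Delta=\left(\frac{\partial L_i}{\partial u_j}\right)=-J$, where $J$ is the Hessian of $\mathcal E$. By Corollary \ref{convex function 1}, $J$ is positive definite, so $\Delta(\hat u)$ is nonsingular, and therefore $L(\hat u)=\hat L$. One could alternatively argue via the monotone Lyapunov functions $\tilde{\mathcal E}$ and $\mathcal C$, but the direct argument suffices.

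\textbf{Local stability at an attainable $\hat L$.} Now assume $L(\hat u)=\hat L$ for some $\hat u\in U$. The admissible space $U$ is open in $\mathbb R^{|F|}$: it is the image of the open box $(0,\frac{\pi}{2})^{|F|}$ under the smooth monotone coordinatewise map (\ref{u_i}). The right-hand sides $f_R(u)=L(u)-\hat L$ and $f_C(u)=-\Delta(L-\hat L)$ are $C^1$, since $L$ is smooth, and both vanish at $\hat u$.

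For the Ricci flow, the linearization is $\frac{\partial f_R}{\partial u}(\hat u)=\Delta(\hat u)=-J(\hat u)$. This matrix is symmetric negative definite, so all its eigenvalues are real and negative.

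For the Calabi flow, differentiate $-\sum_j\Delta_{ij}(L_j-\hat L_j)$. The term involving derivatives of $\Delta$ is multiplied by $L-\hat L$, which vanishes at $\hat u$. Hence $\frac{\partial f_C}{\partial u}(\hat u)=-\Delta(\hat u)^2=-J(\hat u)^2$. Since $J(\hat u)$ is symmetric positive definite, this matrix is symmetric negative definite with eigenvalues $-\lambda_k^2<0$, where $\lambda_k$ are the eigenvalues of $J(\hat u)$.

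Lemma \ref{Ly thm} then gives a neighborhood of $\hat u$ on which both flows exist for all time and converge exponentially to $\hat u$. Taking the intersection of the two neighborhoods yields a single neighborhood $\tilde U$ that works for both flows.

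\textbf{Main obstacle.} The only delicate point is the linearization of the Calabi flow, where one must note that the terms containing $\partial\Delta/\partial u$ drop out at the equilibrium. The rest reduces to the positive definiteness of $J$ established in Corollary \ref{convex function 1}.
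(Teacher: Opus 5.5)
Your proposal is correct and follows essentially the same route as the paper. You show that the limit is an equilibrium, using a sign-persistence/integration argument where the paper uses the mean value theorem on the intervals $(n,n+1)$, and then apply Lemma \ref{Ly thm} with linearizations $\Delta(\hat u)$ and $-\Delta(\hat u)^2$, using the negative definiteness of $\Delta=-J$; your checks that $U$ is open and that the $\partial\Delta/\partial u$ terms vanish at the equilibrium supply details the paper leaves implicit.
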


\begin{proof}
Suppose $u(t)$ is a solution of the combinatorial Ricci flow (\ref{CRF;u}). If $u(t)$ converges to $\hat{u}\in U$, i.e., $\lim_{t\to +\infty}u(t)=\hat{u}$, then by the continuity of $L$ with respect to $u$, the limit $\lim_{t\to +\infty}L(u(t))=L(\hat{u})$ exists. Furthermore, by the mean value theorem, for each integer $n$ there exists $t_n\in(n,n+1)$ such that
\[ u_i(n+1)-u_i(n)=\frac{\mathrm{d}u_i}{\mathrm{d}t}(t_n)=L_i(u(t_n))-\hat{L}_i. \]
Taking the limit on both sides, we have $\lim_{n\to +\infty} L_i(u(t_n))-\hat{L}_i=0$, which implies $L(\hat{u})=\hat{L}$.
Similarly, suppose $u(t)$ is a solution of the combinatorial Calabi flow (\ref{CCF;u}). If $u(t)$ converges to $\hat{u}\in U$, then $\lim_{t\to +\infty}L(u(t))=L(\hat{u})$ exists. Moreover, by the mean value theorem, for each integer $n$ there exists  $t_n\in(n,n+1)$ such that
\[ u_i(n+1)-u_i(n)=\frac{\mathrm{d}u_i}{\mathrm{d}t}(t_n)=-(\Delta(L(u(t_n))-\hat{L}))_i. \]
Taking the limit on both sides, we obtain $\lim_{n\to +\infty} \Delta(L(u(t_n))-\hat{L})=0$. Since the matrix $\Delta$ is negative definite, it follows that $L(\hat{u})=\hat{L}$.

Conversely, if $\hat{L}$ is attainable, then there exists $\hat{u}\in U$ such that $L(\hat{u})=\hat{L}$.
For the combinatorial Ricci flow (\ref{CRF;u}), let $f(u)=L-\hat{L}$. Then $\hat{u}$ is an equilibrium state of the flow (\ref{CRF;u}), and $Df(\hat{u})=\Delta(\hat{u})$ has all negative eigenvalues. Therefore, the conclusion follows from Lemma \ref{Ly thm}.
For the combinatorial Calabi flow (\ref{CCF;u}), let $g(u)=-\Delta(L-\hat{L})$. Then $\hat{u}$ is an equilibrium state of the flow (\ref{CCF;u}), and $Dg(\hat{u})=-\Delta^2(\hat{u})$ has all negative eigenvalues. Hence, by Lemma \ref{Ly thm}, the result holds.
\end{proof}

\begin{remark}
For the Euclidean and hyperbolic orthogonal ring patterns introduced by Bobenko-Hoffmann-R\"orig \cite{B-H-R} and Bobenko \cite{B} respectively, the combinatorial curvature flows analogous to the combinatorial curvature flows (\ref{CRF;u}) and (\ref{CCF;u}) can be defined similarly, and the local convergence of these flows can also be established.
\end{remark}

\end{document}